\renewcommand{\dateseparator}{-}
\renewcommand{\today}{\the\year \dateseparator \twodigit\month
\dateseparator \twodigit\day}
\newcounter{restatecount}
\newcommand{\restate}[3][Theorem]
{
\addtocounter{restatecount}{1}
\theoremstyle{plain}
\newtheorem*{thm-\arabic{restatecount}}{#1 \ref{#2}}
\begin{thm-\arabic{restatecount}}
#3
\end{thm-\arabic{restatecount}}
}
\theoremstyle{plain}
\newtheorem{thm}{Theorem}[section]
\newtheorem{thm-ref}{Theorem}
\newtheorem{lem}[thm]{Lemma}
\newtheorem{cor}[thm]{Corollary}
\newtheorem{prop}[thm]{Proposition}
\theoremstyle{definition}
\newtheorem*{examples}{Examples}
\theoremstyle{remark}
\newtheorem*{remark}{Remark}
\numberwithin{equation}{section}
\newcommand{\M}{{\mathcal M}}
\newcommand{\T}{{\mathcal T}}
\newcommand{\C}{{\mathbb C}}
\newcommand{\R}{{\mathbb R}}
\title{The complex geometry of Teichm\"uller spaces and bounded
  symmetric domains}
\author{Stergios Antonakoudis *} 
\thanks{* Presented to the 6th Ahlfors-Bers Colloquium at Yale, New
  Haven CT, 23-26 October 2014.}
\begin{document}

  \maketitle


\section{Introduction}\label{sec:intro}
\noindent We study isometric maps between Teichm\"uller spaces
$\T_{g,n} \subset \C^{3g-3+n}$ and bounded symmetric domains
$\mathcal{B}\subset \C^N$ in their intrinsic Kobayashi metric. From a
complex analytic perspective, these two important classes of geometric
spaces have several features in common but also exhibit many
differences. The focus here is on recent results proved by the author;
we give a list of open questions at the end.

In a nutshell, we will see that Teichm\"uller spaces equipped with
their intrinsic Kobayashi metric exhibit a remarkable rigidity
property reminiscent of rank one bounded symmetric domains - in
particular, we will show that isometric disks are Teichm\"uller
disks. However, we will see that Teichm\"uller spaces and bounded
symmetric domains \textit{do not mix} isometrically so long as both
have dimension two or more.

The proofs of these results, although technically different, use the
common theme of \textit{complexification} and \textit{realification};
they also involve ideas from geometric topology.
\section{The setting}\label{sec:intro}
\begin{figure}[h]
  \centering
  \includegraphics[scale=0.3]{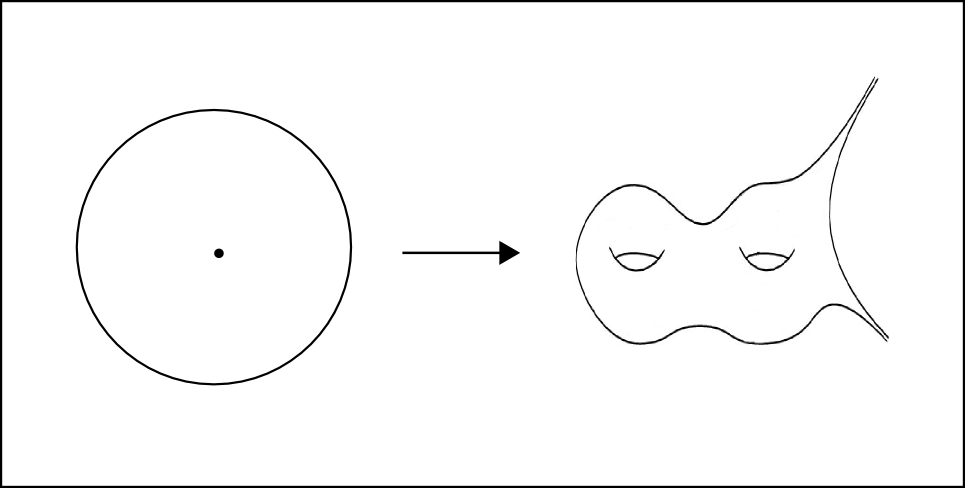}
  \caption{Universal covering $\pi:\Delta \rightarrow X = \Delta / \Gamma$}
  \label{covering}
\end{figure}

Let $X$ be a hyperbolic Riemann surface of finite type homeomorphic to
a fixed oriented topological surface $\Sigma_{g,n}$ of genus $g$ with
$n$ punctures. More concretely, we can present $X$ as a quotient space
$X = \Delta / \Gamma$, where $\Gamma \leq \text{Aut}(\Delta)$ is
discrete group of automorphisms of the unit disk
$\Delta\cong\{~z\in \C : |z| < 1~\}$ and
$\pi:\Delta \rightarrow X = \Delta / \Gamma$ is the universal covering
map.

The unit disk $\Delta$ is equipped with a metric $|dz|/(1-|z|^2)$ of
constant curvature, known as the Poincar\'e metric, which we shall
denote by $\mathbb{CH}^1$ and refer to as the complex hyperbolic
line. The group $\text{Aut}(\Delta)$ can be identified with the group
$\text{Isom}^{+}(\mathbb{CH}^1)$ of orientation preserving isometries
of $\mathbb{CH}^1$, hence we can endow $X = \Delta / \Gamma$ with a
finite-volume metric of constant curvature.~\\

The moduli space $\M_{g,n}$ parametrizing isomorphism classes of
Riemann surfaces $X$ has a similar description. It is a complex
quasi-projective variety which we can present as the quotient
$\M_{g,n} = \T_{g,n} / \text{Mod}_{g,n}$, where
$\text{Mod}_{g,n} \leq \text{Aut}(\T_{g,n})$ is a discrete group of
automorphisms of a contractible bounded domain
$\T_{g,n} \subset \C^{3g-3+n}$.

Teichm\"uller space $\T_{g,n}$ which parametrizes isomorphism classes
of marked Riemann surfaces is, therefore, the orbifold universal cover
of the moduli space of curves $\M_{g,n}$ and it is naturally a complex
manifold of dimension $3g-3+n$. It is equipped with a complete
intrinsic metric - the Teichm\"uller metric - which endows $\M_{g,n}$
with the structure of a finite-volume complex orbifold. It is known
that Teichm\"uller space can be realized as a bounded domain
$\T_{g,n} \subset\C^{3g-3+n}$ by the Bers embeddings.~\cite{Bers:ts:survey}~\\

Classically, another class of complex spaces admitting a similar
description is that of locally symmetric varieties $\mathcal{V}$ (of
non-compact type), which we can present as the quotient
$\mathcal{V} = \mathcal{B} / \Gamma$, where
$\Gamma\leq \text{Aut}(\mathcal{B})$ is a lattice, a discrete group of
automorphisms of a bounded symmetric domain $\mathcal{B}\subset \C^N$.

Let $\mathcal{B}\subset\C^{N}$ be a bounded domain; we call
$\mathcal{B}$ a bounded symmetric domain if every point
$p\in \mathcal{B}$ is an \textit{isolated} fixed point of a
holomorphic involution
$\sigma_{p} : \mathcal{B}\rightarrow \mathcal{B}$, with
$\sigma_{p}^2=\text{id}_{\mathcal{B}}$. Bounded symmetric domains are
contractible and homogeneous as complex manifolds. The simplest
example is given by the unit disk $\Delta \cong \mathbb{CH}^1$, which
is in fact the unique (up to isomorphism) contractible bounded domain
of complex dimension one. It is classically known that all Hermitian
symmetric spaces of non-compact type can be realized as bounded
symmetric domains $\mathcal{B}\subset\C^{N}$ by the Harish-Chandra
embeddings.~\cite{Helgason:book:dglgss}~\\

A feature that Teichm\"uller spaces and bounded symmetric domains have
in common is that they contain holomorphic isometric copies of
$\mathbb{CH}^1$ through every point and complex direction; in
particular, in complex dimension one, Teichm\"uller spaces and bounded
symmetric domains coincide.  However, in higher dimensions, the
situation is quite different. H. L. Royden proved that, when
$\text{dim}_{\C}\T_{g,n} \geq 2$, $\text{Aut}(\T_{g,n})$ is discrete
and therefore $\T_{g,n}$ is not a symmetric
space.~\cite{Royden:metric} Central to Royden's work was the use of
the intrinsic Kobayashi metric of $\T_{g,n}$.

\section{The Kobayashi metric}\label{sec:kobayashi-metric}

Let $\mathcal{B}\subset\C^{N}$ be a bounded domain, its intrinsic
Kobayashi metric is the \textit{largest} complex Finsler metric such
that every holomorphic map $f: \mathbb{CH}^1 \rightarrow \mathcal{B}$
is non-expanding: $||f'(0)||_{\mathcal{B}}\leq 1$. It determines both
a family of norms $||\cdot||_{\mathcal{B}}$ on the tangent bundle
$T\mathcal{B}$ and a distance $d_{\mathcal{B}}(\cdot,\cdot)$ on pairs
of points.~\cite{Kobayashi:book:hyperbolic}

We recall that Schwarz lemma shows that every holomorphic map
$f:\mathbb{CH}^1\rightarrow \mathbb{CH}^1$ is non-expanding. The
Kobayashi metric provides a natural generalisation - it has the
fundamental property that every holomorphic map between complex
domains is non-expanding and, in particular, every holomorphic
automorphism is an isometry. The Kobayashi metric of complex domain
depends only on its structure as a complex manifold.~\\

\begin{examples}~\\
  1. $\mathbb{CH}^1$ realises the unit disk $\Delta$ with its
  Kobayashi metric. The Kobayashi metric on the unit ball
  $\mathbb{CH}^2\cong\{~~(z,w)~~ | ~~ |z|^2 + |w|^2 < 1~~ \} \subset
  \C^2$ coincides with its unique (complete) invariant Ka\"ehler metric
  of constant holomorphic curvature -4.~\\ 
  2. The Kobayashi metric on the bi-disk $\mathbb{CH}^1\times\mathbb{CH}^1$ 
  coincides with the sup-metric of the two factors. It is a complex Finsler metric; 
  it is not a Hermitian metric.~\\
  3. The Kobayashi metric on $\T_{g,n}$ coincides with the classical
  Teichm\"uller metric, which endows $\T_{g,n}$ with the structure of
  a complete geodesic metric space.
\end{examples}
Incidentally, examples 1 and 2 above describe all bounded symmetric
domains up to isomorphism in complex dimensions one and two. We will
discuss example 3 in more detail below.

\section{Main results}\label{sec:results}

An important feature of the Kobayashi metric of Teichm\"uller space is
that every holomorphic map $f:\mathbb{CH}^1 \hookrightarrow \T_{g,n}$
such that $df$ is an isometry on tangent spaces is \textit{totally
  geodesic}: it sends real geodesics to real geodesics preserving
their length. Moreover, there are such holomorphic isometries, known
as Teichm\"uller disks, through every point in every complex
direction.

\subsection*{Holomorphic rigidity} Our first result is the
following:~\footnote{Theorem \ref{thm:disks:intro} solves problem 5.3
  from \cite{Fletcher:Markovic:survey}.}
\begin{thm}\label{thm:disks:intro}
  Every totally geodesic isometry $f: \mathbb{CH}^1 \hookrightarrow
  \T_{g,n}$ for the Kobayashi metric is either holomorphic or
  anti-holomorphic. In particular, it is a Teichm\"uller disk.
 \end{thm}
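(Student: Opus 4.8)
The plan is to analyze the map $f:\mathbb{CH}^1\hookrightarrow\T_{g,n}$ through the complexification of its domain. The key observation is that the Teichm\"uller metric's infinitesimal norm is the $L^1$-norm on holomorphic quadratic differentials, and its unit sphere has a very rigid structure: at each point $Y\in\T_{g,n}$, the cotangent space $Q(Y)=H^0(Y,K_Y^{\otimes 2})$ has an $L^1$-norm whose unit ball is strictly convex away from a thin set, and the complex geodesics (Teichm\"uller disks) are exactly the ones associated to individual quadratic differentials $q\in Q(Y)$. First I would set up the following: given a totally geodesic isometry $f:\mathbb{CH}^1\to\T_{g,n}$, precompose and postcompose as needed so that $f(0)=X$ and consider the real-differential $df_0:T_0\mathbb{CH}^1\to T_X\T_{g,n}$. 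Since $f$ is a totally geodesic isometry, for each unit vector $v\in T_0\mathbb{CH}^1\cong\C$ the geodesic ray $t\mapsto f(\exp_0(tv))$ is a Teichm\"uller geodesic, hence is the restriction of a unique Teichm\"uller disk $g_v:\mathbb{CH}^1\to\T_{g,n}$ with $g_v(0)=X$. The heart of the matter is to show that all the $g_v$, as $v$ ranges over the unit circle, organize into a single holomorphic (or anti-holomorphic) map, namely $f$ itself.

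The main technical step is a \emph{complexification/realification} argument. Each Teichm\"uller disk $g_v$ is determined by a quadratic differential $q_v\in Q(X)$ with $\|q_v\|_1=1$; the assignment $v\mapsto q_v$ is the ``Gauss map'' of $f$ along the circle. Because $f$ is an isometry on the nose (not just non-expanding), the extremality in Teichm\"uller's theorem forces the directional derivatives to realize the norm, and one can use the infinitesimal form of the uniqueness in Teichm\"uller's theorem to pin down $q_v$ precisely. I would then study how $q_v$ varies with $v$: the totally-geodesic hypothesis says $f$ maps the real geodesic through $0$ in direction $v$ isometrically onto a real Teichm\"uller geodesic, and comparing the second-order behavior (the $\partial\bar\partial$ of the distance function, or equivalently the fact that $f^*(\text{Teich metric})$ equals the Poincar\'e metric on $\mathbb{CH}^1$) yields an overdetermined system. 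Concretely, writing $f$ in Bers coordinates and differentiating the isometry relation, one extracts that the $(0,1)$-part $\bar\partial f$ of $df$ must vanish identically, or else the $(1,0)$-part does — this dichotomy is exactly where the ``holomorphic or anti-holomorphic'' alternative comes from, and it reflects the fact that $\mathbb{CH}^1$ has no nontrivial harmonic-but-not-holomorphic isometric deformations into a space whose complex geodesics are as rigid as Teichm\"uller's.

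The step I expect to be the main obstacle is establishing the rigidity of the Gauss map $v\mapsto q_v$ — specifically, ruling out the possibility that $f$ is an ``isometric'' immersion of the disk that wobbles between holomorphic and anti-holomorphic directions, or that bends in a way invisible to the first-order norm but detectable only at higher order. This requires a genuine use of the \emph{strict} convexity properties of the Teichm\"uller norm (Royden's theorem that the norm is $C^1$ but its unit ball has flats only along the ``Strebel directions'', together with finer structure due to the quadratic-differential description), combined with the fact that a totally geodesic isometry of $\mathbb{CH}^1$ cannot have its image locally contained in a product or in a totally real submanifold unless it degenerates. I would handle this by passing to the cotangent space and showing that the pullback $f^*\colon Q(Y)\to Q(\mathbb{CH}^1)$ along the disk, a norm-preserving real-linear map between Banach spaces, must be complex-linear or conjugate-linear — a Banach-space rigidity statement that one proves by examining extreme points of the unit ball of $Q(Y)$ (the differentials of the form $\omega^2$ for abelian differentials $\omega$, and their analogues) and how a real-linear isometry must permute them. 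Once the Gauss map is shown to be (anti-)holomorphic and constant in the appropriate sense, integrating recovers that $f$ itself is a Teichm\"uller embedding, completing the proof.
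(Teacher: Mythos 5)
Your proposal has a genuine gap at exactly the point you flag as ``the main obstacle,'' and the mechanism you propose there does not work. You reduce the theorem to a pointwise statement: that the norm-preserving real-linear differential $df_x$ (equivalently, a real-linear isometry at the level of the Teichm\"uller/cotangent norms) must be complex-linear or conjugate-linear, to be proved by strict convexity and an extreme-point analysis of the unit ball. But no amount of convexity or smoothness of a norm can force such a dichotomy: for the Euclidean norm on $\C^2$ the map $v \mapsto (v,\bar v)/\sqrt{2}$ is norm-preserving and neither $\C$-linear nor conjugate-linear, and for the sup-norm on the bidisk the map $z\mapsto(z,\bar z)$ is even a \emph{totally geodesic Kobayashi isometry} that is neither holomorphic nor anti-holomorphic --- which is precisely why the theorem fails for bounded symmetric domains of rank $\geq 2$. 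So first- and second-order data at a point (``differentiating the isometry relation,'' the $\partial\bar\partial$ of the distance) cannot by themselves yield ``$\bar\partial f\equiv 0$ or $\partial f\equiv 0$''; that sentence in your sketch is a restatement of the theorem, not an argument, and the linear rigidity you would need for the specific Teichm\"uller norm is neither proved in your proposal nor supplied by the extreme-point structure of $Q(X)$ (the paper's related linear statement, Theorem~\ref{thm:slice}, is a different and weaker assertion, proved elsewhere, and the fine linear structure of $T_X\T_{g,n}$ is delicate --- cf.\ Question~2 in the final section).

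What is actually needed, and what the paper supplies, is a \emph{global} input. The paper's proof extends $f$ to a holomorphic map $F:\mathbb{CH}^1\times\overline{\mathbb{CH}^1}\to\T_{g,n}$ with $F(z,z)=f(z)$, by (i) using the uniqueness of the Teichm\"uller disk containing each real Teichm\"uller geodesic $f(\mathrm{Fix}(r))$ to define $F$ on the graphs $\Gamma_r$ of hyperbolic reflections, which foliate the bidisk, and (ii) invoking Bernstein's theorem on separate analyticity (via the Klein model) to get holomorphy near the totally real diagonal and hence everywhere. The dichotomy then comes from the flip symmetry $(z,w)\in\Gamma_r \iff (w,z)\in\Gamma_r$, which forces $d_{\T_{g,n}}(F(z,w),F(w,z))=d_{\mathbb{CH}^1}(z,w)$, hence properness of $F$ in at least one variable on a set of directions of positive measure, and finally from Sullivan's rigidity theorem for holomorphic families of proper holomorphic disks in $\T_{g,n}$, which collapses $F$ to $F(z,w)=F(z,z)$ or $F(z,w)=F(w,w)$. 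Your Gauss-map picture is a reasonable way to organize the family of Teichm\"uller disks through the basepoint, but without a global theorem of Sullivan type (or a genuinely new linear rigidity theorem for the Teichm\"uller norm, with proof) there is no way to pass from ``each real geodesic lies on a Teichm\"uller disk'' to the holomorphic/anti-holomorphic alternative.
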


 This result is classically known for bounded symmetric domains with
 rank one and, more generally, for \textit{strictly} convex bounded
 domains. However, it is not true for bounded symmetric domains with
 rank two or more.  Our proof of Theorem~\ref{thm:disks:intro}
 recovers these classical results along with Teichm\"uller spaces by
 providing a more geometric approach.

 Theorem~\ref{thm:disks:intro} shows that the intrinsic
 Teichm\"uller-Kobayashi metric of $\T_{g,n}$ determines its natural
 structure as a complex manifold.~\\

 The following corollary follows easily from the theorem above.
\begin{cor}
  Every totally geodesic isometry $f: \T_{g,n} \hookrightarrow \T_{h,m}$ 
  is either holomorphic or anti-holomorphic.
\end{cor}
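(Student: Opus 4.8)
The plan is to deduce the corollary from Theorem~\ref{thm:disks:intro} by restricting $f$ to Teichm\"uller disks and reading off its derivative. Through every point $X\in\T_{g,n}$ and in every complex direction there is a Teichm\"uller disk, i.e.\ a holomorphic totally geodesic isometric embedding $\iota\colon\mathbb{CH}^1\hookrightarrow\T_{g,n}$ with $\iota(0)=X$. Since $\iota$ carries real geodesics of $\mathbb{CH}^1$ isometrically onto real geodesics of $\T_{g,n}$ and $f$ does the same from $\T_{g,n}$ to $\T_{h,m}$, the composite $f\circ\iota\colon\mathbb{CH}^1\hookrightarrow\T_{h,m}$ is again a totally geodesic isometry for the Kobayashi metric, so Theorem~\ref{thm:disks:intro} (with target $\T_{h,m}$) forces it to be holomorphic or anti-holomorphic. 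In particular $f\circ\iota$ is smooth; hence, at any point $X$ where $f$ is differentiable, the chain rule $d(f\circ\iota)_0=df_X\circ d\iota_0$ together with the complex linearity of $d\iota_0$ shows that $df_X$ is complex linear or complex conjugate-linear on the complex line $L=d\iota_0(T_0\mathbb{CH}^1)\subset T_X\T_{g,n}$.

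The isometry $f$ is locally Lipschitz in Bers coordinates --- on a bounded domain the Kobayashi, hence Teichm\"uller, metric is locally comparable to the Euclidean metric on either side --- so by Rademacher's theorem $f$ is differentiable at almost every point. Fix such a point $X$ and split $df_X=A'+A''$ into its complex-linear part $A'v=\tfrac12\bigl(df_Xv-i\,df_X(iv)\bigr)$ and its conjugate-linear part $A''v=\tfrac12\bigl(df_Xv+i\,df_X(iv)\bigr)$. On a complex line $\C v$, $df_X$ is complex linear exactly when $A''v=0$ and conjugate linear exactly when $A'v=0$; applying the first paragraph to the Teichm\"uller disk through $X$ tangent to each direction gives $T_X\T_{g,n}=\ker A'\cup\ker A''$. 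Both kernels are complex subspaces and a complex vector space is never the union of two proper subspaces, so one of them is everything: $df_X$ is complex linear, or conjugate linear.

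Now I propagate this to all of $\T_{g,n}$. Fix a differentiable point $X_0$ and suppose $df_{X_0}$ is complex linear (otherwise run the same argument with ``holomorphic'' replaced by ``anti-holomorphic''). For every Teichm\"uller disk $D$ through $X_0$, $f|_D$ is holomorphic or anti-holomorphic by Theorem~\ref{thm:disks:intro}, and since its derivative at $X_0$ is $df_{X_0}|_{T_{X_0}D}$, which is complex linear and nonzero, $f|_D$ is holomorphic. Every point of $\T_{g,n}$ is joined to $X_0$ by a Teichm\"uller geodesic, which lies in a Teichm\"uller disk through $X_0$, so these disks cover $\T_{g,n}$. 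Thus at each point $Z$ where $f$ is differentiable, $df_Z$ is complex linear (and nonzero) on the tangent line of such a disk through $Z$, hence --- by the linear-algebra step --- complex linear. Therefore $\bar\partial f=0$ almost everywhere, and since $f$ is Lipschitz this holds in the sense of distributions; standard elliptic regularity then shows $f$ is holomorphic. (When $\dim_{\C}\T_{g,n}=1$ the statement reduces to Theorem~\ref{thm:disks:intro}.)

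The only step that is not purely formal is the a priori regularity of $f$: one needs that an isometry which merely sends geodesics isometrically to geodesics is differentiable --- here extracted from local Lipschitz control plus smoothness along Teichm\"uller disks. Past that, the proof is just the reduction to Theorem~\ref{thm:disks:intro}, the union-of-two-subspaces observation, and the fact that Teichm\"uller geodesics from a point sweep out the whole space; this is why the corollary follows easily.
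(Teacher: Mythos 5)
Your argument is correct: the paper offers no proof beyond asserting that the corollary follows easily from Theorem~\ref{thm:disks:intro}, and your proof is a sound implementation of the evident reduction --- compose $f$ with Teichm\"uller disks through every point and complex direction, apply Theorem~\ref{thm:disks:intro} to each composite, and use the kernel-union linear algebra at points of differentiability to get $\C$-linearity or conjugate-linearity of $df$. The extra regularity bookkeeping you supply (local Lipschitz control of a Kobayashi isometry in Bers coordinates, Rademacher, nonvanishing of directional derivatives from distance preservation, and the passage from a.e.\ vanishing of $\bar\partial f$ to genuine holomorphy via the distributional Cauchy--Riemann equations) is exactly what is needed to make ``follows easily'' rigorous, and each of these steps is justified.
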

We note that, indeed, there are many holomorphic isometries
$f: \T_{g,n} \hookrightarrow \T_{h,m}$ between Teichm\"uller spaces
$\T_{g,n}$,$\T_{h,m}$ in their Kobayashi metric, induced by pulling
back complex structures from a fixed topological covering map
$\psi: \Sigma_{h,m} \rightarrow \Sigma_{g,n}$ of the underlying
topological surfaces $\Sigma_{g,n}$,$\Sigma_{h,m}$.~\cite{Kra:survey}

\subsection*{Symmetric spaces vs Teichm\"uller spaces}
Like Teichm\"uller spaces there are also many holomorphic isometries
$f: \mathcal{B} \hookrightarrow \widetilde{\mathcal{B}}$ between
bounded symmetric domains $\mathcal{B},\widetilde{\mathcal{B}}$ in
their Kobayashi metric.~\cite{Helgason:book:dglgss} However, in
dimension two or more, Teichm\"uller spaces and bounded symmetric
domains \textit{do not mix} isometrically. 

More precisely, we prove:
\begin{thm}\label{thm:bsds:intro}
  Let $\mathcal{B}$ be a bounded symmetric domain and $\T_{g,n}$ be a
  Teichm\"uller space with $\text{dim}_{\C}\mathcal{B},\text{dim}_{\C}\T_{g,n}
  \geq 2$. There are no holomorphic isometric immersions \[\mathcal{B}
  \xhookrightarrow{~~~f~~~} \T_{g,n} \quad \text{or} \quad \T_{g,n}
  \xhookrightarrow{~~~f~~~} \mathcal{B}\] such that $df$ is an isometry for the
  Kobayashi norms on tangent spaces.
\end{thm}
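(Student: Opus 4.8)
\smallskip
\noindent\emph{Proof strategy.} The plan is to treat holomorphic isometric immersions in both directions, taking as the basic structural input the fact --- immediate from Theorem~\ref{thm:disks:intro} --- that the complex geodesics of $\T_{g,n}$ for the Kobayashi metric are exactly the Teichm\"uller disks, and that through each point $X$, in each complex tangent direction, there is a \emph{unique} one. Indeed, a complex geodesic is a holomorphic isometric disk, hence by Theorem~\ref{thm:disks:intro} a Teichm\"uller disk; a Teichm\"uller disk corresponds to a class $[\phi]\in\mathbb{P}(Q(X))$ of integrable holomorphic quadratic differentials and has tangent direction $[\,|\phi|/\phi\,]$ at $X$; and if two such differentials determined the same tangent vector in $T_X\T_{g,n}$, then pairing this vector against one of them and using that a meromorphic function on $X$ which is real and positive on a set of positive measure must be constant forces the differentials to be proportional. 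Since a Teichm\"uller disk is moreover a \emph{parametrised} isometric disk, it is determined by its $1$-jet at the origin. On the source side we invoke the structure theory of bounded symmetric domains: if $\text{dim}_\C\mathcal{B}\ge 2$ then either $\mathcal{B}$ has rank $\ge 2$, in which case the polydisk theorem provides a holomorphically and isometrically embedded bidisk $\Delta\times\Delta\hookrightarrow\mathcal{B}$, or $\mathcal{B}\cong\mathbb{CH}^N$ with $N\ge 2$.

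Two of the four cases then follow cleanly. If $f\colon\mathcal{B}\hookrightarrow\T_{g,n}$ with $\text{rank}\,\mathcal{B}\ge 2$, restrict $f$ to obtain a holomorphic isometric immersion $\Delta\times\Delta\hookrightarrow\T_{g,n}$. On the bidisk the Kobayashi metric is the supremum of the two Poincar\'e factors, so the Schwarz--Pick inequality shows that for \emph{every} holomorphic $g\colon\Delta\to\Delta$ the graph $z\mapsto(z,g(z))$ is a holomorphic isometric disk; those with $g(0)=g'(0)=0$ are distinct isometric disks through the origin with the common $1$-jet $\bigl(0\,;\,(1,0)\bigr)$. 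Composing with $f$, each maps to a Teichm\"uller disk through $f(0)$ with tangent vector $df_0(1,0)$, hence, by uniqueness and the $1$-jet determination, to one and the same Teichm\"uller disk; therefore $f(z,g(z))=f(z,0)$ for all $z$ and all such $g$. Since for fixed $z_0\ne 0$ the values $g(z_0)$ fill the disk $\{|w|\le|z_0|^2\}$, the map $f$ is independent of the second coordinate there, so by the identity principle $\partial f/\partial w\equiv 0$ on $\Delta\times\Delta$ and $df_0$ has a kernel --- contradicting that $df$ is an isometry on tangent spaces. If instead $f\colon\T_{g,n}\hookrightarrow\mathbb{CH}^N$, note that the Kobayashi metric of $\mathbb{CH}^N$ is Hermitian, so that $df$ pulls it back to a Hermitian norm on each tangent space $T_X\T_{g,n}$; the Teichm\"uller--Kobayashi metric would then be Hermitian, contradicting Royden's theorem that it is genuinely Finsler once $\text{dim}_\C\T_{g,n}\ge 2$.

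There remain the immersions $\mathbb{CH}^N\hookrightarrow\T_{g,n}$ (with $N\ge 2$) and $\T_{g,n}\hookrightarrow\mathcal{B}$ with $\text{rank}\,\mathcal{B}\ge 2$; here uniqueness of complex geodesics and the Hermitian argument give no leverage, and this is where I expect the main obstacle to lie. For $f\colon\mathbb{CH}^N\hookrightarrow\T_{g,n}$, the derivative $df_0$ realises the indicatrix of the Teichm\"uller--Kobayashi norm on $T_X\T_{g,n}$ as containing a complex-linear slice of dimension $\ge 2$ that is an ellipsoid (the image of the round indicatrix of $\mathbb{CH}^N$) --- equivalently, the $L^1$-norm on $Q(X)$ would admit a Hermitian quotient of dimension $\ge 2$. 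Ruling this out cannot be done from convex geometry alone, since the $L^1$-ball of $Q(X)$ is already strictly convex and $C^1$, and so must exploit the analytic structure: one studies how the Teichm\"uller disks through $X$ in a two-dimensional family of directions are forced to pack together inside $\T_{g,n}$, which is governed to second order by the Teichm\"uller deformation and in particular by the failure of $\phi\mapsto|\phi|/\phi$ to depend holomorphically on $\phi$ when $\text{dim}_\C\T_{g,n}\ge 2$; this non-holomorphicity is incompatible with the slice being Hermitian. The last case reduces to the same phenomenon: composing $f\colon\T_{g,n}\hookrightarrow\mathcal{B}$ with a Lempert holomorphic retraction of the convex domain $\mathcal{B}$ onto a complex geodesic containing the image of a Teichm\"uller disk $\Theta\subset\T_{g,n}$ produces a holomorphic retraction $\T_{g,n}\to\Theta$, which is in turn excluded because such a retraction would organise the Teichm\"uller disks meeting $\Theta$ into a holomorphic family. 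Thus the argument will ultimately rest on a quantitative, second-order study of the Teichm\"uller deformation, showing that the Teichm\"uller--Kobayashi metric is nowhere infinitesimally Hermitian on a complex subspace of dimension $\ge 2$ and that $\T_{g,n}$ admits no holomorphic retraction onto a Teichm\"uller disk --- the genuinely new input beyond Theorem~\ref{thm:disks:intro}.
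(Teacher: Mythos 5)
Your two ``clean'' cases are indeed fine: the polydisk-theorem argument (graphs $z\mapsto(z,g(z))$ with a common $1$-jet forcing $\partial f/\partial w\equiv 0$) correctly rules out $\mathcal{B}\hookrightarrow\T_{g,n}$ when $\mathrm{rank}\,\mathcal{B}\ge 2$, and the Hermitian-pullback argument rules out $\T_{g,n}\hookrightarrow\mathbb{CH}^N$. But the substance of Theorem~\ref{thm:bsds:intro} lies precisely in the two cases you leave open, above all $\mathbb{CH}^2\hookrightarrow\T_{g,n}$ (Theorem~\ref{thm:balls:intro}), and there your sketch is not a proof and points in a direction the paper explicitly flags as unavailable: showing that the Teichm\"uller norm on $T_X\T_{g,n}$ admits no two-dimensional Hermitian (``round'') slice is posed in the final section as an \emph{open question} (Question 2; Theorem~\ref{thm:slice} concerns only the cotangent $L^1$-side and is used for the submersion statement), so an ``infinitesimally nowhere Hermitian'' lemma cannot be invoked, and no second-order analysis of the Teichm\"uller deformation is carried out in your proposal. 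The paper's actual proof of Theorem~\ref{thm:balls:intro} is global rather than infinitesimal: Slodkowski's theorem makes $f$ totally geodesic; one takes orthogonal rays in a totally real $\mathbb{RH}^2\subset\mathbb{CH}^2$ whose images are generated by uniquely ergodic foliations \cite{Chaika:Cheung:Masur:winning}; Masur's Theorem~\ref{thm:masur:ue} identifies extremal-length horocycles with images of complex hyperbolic horocycles, which forces $i(\mathcal{F}(e^{i\theta}q),\mathcal{G})=\tfrac12$ for all $\theta$; and Proposition~\ref{prop:intersection}, i.e.\ the formula $\sum_i|\mathrm{Re}(e^{i\theta/2}v_i)|$ coming from saddle connections, shows such an intersection number can never be constant in $\theta$. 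None of this extremal-length/intersection-number mechanism appears in your proposal, and it is the part that cannot be replaced by convexity or jet-uniqueness considerations.

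The reduction you propose for the last case, $\T_{g,n}\hookrightarrow\mathcal{B}$ with $\mathrm{rank}\,\mathcal{B}\ge 2$, is also not usable as stated: composing with a Lempert retraction does produce a holomorphic retraction of $\T_{g,n}$ onto a Teichm\"uller disk, but the non-existence of such retractions is exactly the paper's open Question 4 (equivalent to Carath\'eodory $\neq$ Kobayashi in some complex direction), and by Kra's theorem \cite{Kra:abelian} the Carath\'eodory and Teichm\"uller metrics agree on abelian Teichm\"uller disks, which via the standard extremal-function argument yields holomorphic retractions onto those disks; so the blanket exclusion you assert is false, and to salvage the argument you would need a specific disk with no retraction, which you do not provide. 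Finally, note that this survey itself proves only the special case Theorem~\ref{thm:balls:intro} and defers the full Theorem~\ref{thm:bsds:intro} to \cite{Antonakoudis:symmetric}; measured against what is proved here, your proposal settles two peripheral cases by different (and valid) means but has a genuine gap at the theorem's core.
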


We record the following special case.
\begin{thm}\label{thm:balls:intro}
  There is no holomorphic isometry $f: \mathbb{CH}^2 \hookrightarrow
  \T_{g,n}$ for the Kobayashi metric.
\end{thm}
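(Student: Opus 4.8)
This is the special case $\mathcal B=\mathbb{CH}^{2}$ of Theorem~\ref{thm:bsds:intro}; we outline the argument in this case, where the bounded symmetric domain has rank one. Suppose, for contradiction, that $f:\mathbb{CH}^{2}\hookrightarrow\T_{g,n}$ is holomorphic and $df$ is an isometry for the Kobayashi norms on tangent spaces; after composing with an automorphism of $\mathbb{CH}^{2}$ we may take $f(0)=X$. Every complex geodesic of $\mathbb{CH}^{2}$ is a totally geodesic, holomorphically embedded copy of $\mathbb{CH}^{1}$, so the restriction of $f$ to such a geodesic is a holomorphic map $\mathbb{CH}^{1}\hookrightarrow\T_{g,n}$ that is an isometry on tangent spaces, hence totally geodesic, hence -- by Theorem~\ref{thm:disks:intro} -- a Teichm\"uller disk. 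Thus the germ at $X$ of the complex surface $f(\mathbb{CH}^{2})$ is swept out by a $\mathbb{P}^{1}$-pencil of Teichm\"uller disks through $X$, one for each complex line in $T_{0}\mathbb{CH}^{2}$. Writing $Q(X)$ for the space of integrable holomorphic quadratic differentials on $X$ with its $L^{1}$-norm, so that $T_{X}\T_{g,n}=Q(X)^{\ast}$ with the dual norm, this says the complex $2$-plane $V:=df_{0}(T_{0}\mathbb{CH}^{2})\subset T_{X}\T_{g,n}$ carries the Teichm\"uller norm as a Hermitian norm; dually, the $L^{1}$-norm of $Q(X)$ descends to a Hermitian norm on the $2$-dimensional quotient $Q(X)/V^{\perp}$.

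The plan is to contradict this by a Royden-style second-order analysis of the Teichm\"uller norm. Its first variation is everywhere Gateaux smooth and carries no information; the decisive term is of second order. Expanding $\|q+t\psi\|_{1}$ in $t\in\C$ about a unit quadratic differential $q$, the $t^{2}$-coefficient equals, up to a universal constant, the pairing of the quadratic differential $\psi^{2}/q$ with the Beltrami differential $\bar q/|q|$. The \emph{squaring} operation $\psi\mapsto\psi^{2}/q$ is sensitive to the zeros of $q$, hence to the conformal structure of $X$ -- it is exactly the mechanism behind Theorem~\ref{thm:disks:intro} -- and it cannot be matched by the second-order jet of a Hermitian norm, for which the analogous coefficient is the square of a \emph{linear} functional of $\psi$; once the complex dimension is at least two, matching the two forces an analytic degeneracy that holds on no finite-type hyperbolic surface. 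Here the complexification/realification theme of the paper enters, comparing the real-analytic Teichm\"uller norm near $X$ with the complex-bilinear form underlying a Hermitian metric of constant holomorphic curvature $-4$; and to organise the computation one uses the pencil, observing that the Teichm\"uller disk with tangent direction $\mu\in\mathbb{P}(V)$ is that of a unit quadratic differential $q_{\mu}$ whose restriction to $V$ is pinned, by extremality and the smoothness of the Hermitian ball, to the Hermitian dual of $\mu$, so that $\mu\mapsto q_{\mu}$ varies anti-holomorphically modulo $V^{\perp}$ and the identities obtained at $X$ propagate along $f(\mathbb{CH}^{2})$.

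The step I expect to be the main obstacle is that the Hermitian datum lives on a \emph{quotient} $Q(X)/V^{\perp}$ of $Q(X)$, not on a subspace: a quotient of an $L^{1}$-type space can be strictly more convex and smooth than the space itself -- a Euclidean ball occurs as a linear image of very non-Euclidean convex bodies -- so no soft functional-analytic argument can work, and the second-order data above must be transported faithfully through the quotient map. This is precisely where one must use that $f(\mathbb{CH}^{2})$ is a genuine complex submanifold foliated by Teichm\"uller disks, i.e.\ by honest extremal holomorphic disks in $\T_{g,n}$, rather than a purely linear-algebraic quotient. A second, milder difficulty, again as in Royden's work, is that everything of order less than two in the expansion of the Teichm\"uller norm is automatically compatible with a Hermitian metric, so the analysis must be pushed exactly to the order at which the conformal geometry of $X$ first intervenes.
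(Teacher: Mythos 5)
There is a genuine gap, and it sits exactly where you flagged it. Your reduction is fine up to the point where you conclude that the Teichm\"uller norm restricted to the $2$-plane $V=df_{0}(T_{0}\mathbb{CH}^{2})\subset T_{X}\T_{g,n}$ is Hermitian, equivalently that the quotient $Q(X)/V^{\perp}$ is Hermitian in the quotient $L^{1}$-norm. But the step that is supposed to finish the proof --- that a second-order (Royden-style) expansion of the $L^{1}$-norm rules this out --- is only asserted (``matching the two forces an analytic degeneracy that holds on no finite-type hyperbolic surface''), and no such argument is known: the nonexistence of a \emph{round} two-dimensional slice of $T_{X}\T_{g,n}$ is precisely Question~2 in the final remarks of this paper and is left open there. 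Theorem~\ref{thm:slice}, the statement your dual picture would need, concerns isometric copies of $(\C^{2},\|\cdot\|_{2})$ \emph{inside} $(Q(X),\|\cdot\|_{1})$ and therefore bears on the submersion statement (Theorem~\ref{thm:bsds:dual:intro}), not on the immersion statement: as you yourself note, a quotient of an $L^{1}$-type ball can be round even when no subspace is, and nothing in your sketch transports the jet computation faithfully through the quotient. There is also an analytic problem with the engine you propose: by Royden's work the Teichm\"uller norm is $C^{1}$ but in general not $C^{2}$, so the ``second-order jet'' comparison with a Hermitian form is not even well posed without substantial extra work; and the squaring mechanism you invoke is not what drives Theorem~\ref{thm:disks:intro} in this paper (that proof goes through complexification and Sullivan's rigidity theorem).

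For contrast, the paper's proof of Theorem~\ref{thm:balls:intro} is not infinitesimal at all but global, exploiting the asymptotic geometry of geodesics (``realification''). After using Slodkowski's extension theorem to see that $f$ is totally geodesic, one takes two orthogonal rays $\gamma_{1},\gamma_{2}$ in a totally real $\mathbb{RH}^{2}\subset\mathbb{CH}^{2}$ whose images are generated by $q,p\in Q_{1}(X)$ with $\mathcal{F}(q),\mathcal{F}(p)$ uniquely ergodic; Masur's theorem (Theorem~\ref{thm:masur:ue}) identifies the $f$-preimages of the extremal length horocycles with the complex hyperbolic horocycles, and a computation along the geodesic joining the two ideal endpoints gives $i(\mathcal{F}(q),\mathcal{F}(p))=e^{t_{1}-t_{2}}=\tfrac12$. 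Rotating one ray by the automorphism $(z,w)\mapsto(e^{-i\theta}z,w)$ leaves this distance unchanged, so $i(\mathcal{F}(e^{i\theta}q),\mathcal{G})$ would be constant in $\theta$, contradicting the saddle-connection formula of Proposition~\ref{prop:intersection}, which exhibits it as $\sum_{i}|\mathrm{Re}(e^{i\theta/2}v_{i})|$. If you want to salvage your approach you would have to prove the round-slice nonexistence outright, which is a strictly harder (and currently open) statement than the theorem you are asked to prove.
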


We also have a similar result for submersions:
\begin{thm}\label{thm:bsds:dual:intro}
  Let $\mathcal{B}$ and $\T_{g,n}$ be as in Theorem~\ref{thm:bsds:intro}. There
  are no holomorphic isometric submersions \[\mathcal{B} \xtwoheadrightarrow{g}
  \T_{g,n} \quad \text{or} \quad \T_{g,n}\xtwoheadrightarrow{g} \mathcal{B}\]
  such that $dg^{*}$ is an isometry for the \textit{dual} Kobayashi
  norms on cotangent spaces.~\\
\end{thm}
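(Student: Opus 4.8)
The plan is to deduce the submersion statement from Theorem~\ref{thm:bsds:intro} by a duality argument, using the fact that a holomorphic submersion which is isometric for the dual Kobayashi norms on cotangent spaces admits, fiberwise, holomorphic isometric sections for the Kobayashi norms on tangent spaces. Concretely, suppose $g:\mathcal{B}\twoheadrightarrow\T_{g,n}$ is a holomorphic submersion with $dg^{*}$ an isometry on cotangent spaces. Fix $p\in\mathcal{B}$ with $q=g(p)$. The dual statement ``$dg_{p}^{*}:T_{q}^{*}\T_{g,n}\to T_{p}^{*}\mathcal{B}$ is isometric'' is equivalent, by reflexivity of these finite-dimensional normed spaces and the fact that the Kobayashi norm and its double dual agree, to the assertion that $dg_{p}:T_{p}\mathcal{B}\to T_{q}\T_{g,n}$ is a \emph{metric quotient map}: it is surjective and $\|v\|_{\T_{g,n}}=\inf\{\,\|u\|_{\mathcal{B}} : dg_{p}(u)=v\,\}$. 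The key geometric input is then the characterization (which I will need to establish) that the infimum is \emph{attained}, and moreover attained along a complex line, so that one obtains a holomorphic isometric section near $q$.

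First I would make this section statement precise. Because the Kobayashi norm on $T_{q}\T_{g,n}$ is realized by a Teichm\"uller disk through $q$ in direction $v$, and the Kobayashi/Teichm\"uller metric on $\T_{g,n}$ is well understood, one can use a normal-families / compactness argument on the fibers $g^{-1}(\cdot)$ together with properness considerations to produce, for each $q'$ near $q$ and each tangent vector, a lift realizing the infimum; assembling these gives a holomorphic map $h:U\hookrightarrow\mathcal{B}$ on a neighborhood $U\ni q$ with $g\circ h=\mathrm{id}_{U}$ and $dh$ isometric for the Kobayashi norms on tangent spaces. Then I would apply Theorem~\ref{thm:bsds:intro} to $h$ in the form $\T_{g,n}\supset U\xhookrightarrow{h}\mathcal{B}$: since $\dim_{\C}\mathcal{B}\ge 2$ and $\dim_{\C}U=\dim_{\C}\T_{g,n}\ge 2$, and since being a holomorphic isometric immersion is a local condition (a germ of such an immersion extends along the Teichm\"uller disks / the statement of Theorem~\ref{thm:bsds:intro} is really infinitesimal), this contradicts Theorem~\ref{thm:bsds:intro}. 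The same argument, with the roles reversed, rules out $\T_{g,n}\twoheadrightarrow\mathcal{B}$: there one uses that a holomorphic retraction of a bounded symmetric domain onto the image of a holomorphic isometric section has totally geodesic image, producing a holomorphic isometric immersion $\mathcal{B}\hookrightarrow\T_{g,n}$, again contradicting Theorem~\ref{thm:bsds:intro}.

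The main obstacle I anticipate is the passage from the dual-isometry hypothesis on cotangent spaces to the existence of a genuine \emph{holomorphic} isometric section, rather than merely a measurable or continuous family of isometric lifts. The difficulty has two layers: (i) showing the metric-quotient infimum is attained — this should follow from the fact that closed balls in the Kobayashi norm on a bounded domain are compact and $dg_{p}$ is a linear surjection, so the fiber over $v$ intersected with a ball is a nonempty compact convex set; and (ii) showing the minimizers vary holomorphically in the base point. For (ii) I would exploit the rigidity already available: on the $\T_{g,n}$ side the unit sphere of the Kobayashi norm is not strictly convex, but the extremal Teichm\"uller disks are unique in each direction away from the bad locus, which forces the lift to be unique and hence (by a Cauchy-estimates / removable-singularities argument) holomorphic. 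Alternatively, and perhaps more cleanly, one can avoid constructing the section globally and instead work purely infinitesimally: differentiate the dual-isometry condition to get that $dg_{p}$ carries the ``complexification'' structure used in the proof of Theorem~\ref{thm:bsds:intro} in a compatible way, and run the complexification/realification argument of that proof directly on the cotangent side. I expect the cleanest writeup reduces Theorem~\ref{thm:bsds:dual:intro} to Theorem~\ref{thm:bsds:intro} via the section construction, with the section's holomorphy being the one genuinely technical point.
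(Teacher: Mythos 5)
Your reduction hinges on the step ``dual isometry on cotangent spaces $\Rightarrow$ holomorphic isometric section'', and that is where the argument breaks. The linear part is fine: if $dg_p^{*}$ is isometric then $dg_p$ is a metric quotient map, and compactness of unit balls in finite dimensions shows every tangent vector downstairs has a norm-minimizing lift. But this is a pointwise statement about vectors; it produces no map $h$ with $g\circ h=\mathrm{id}$, let alone a holomorphic one with $dh$ isometric. Your proposed fixes (normal families on the fibers, uniqueness of extremal Teichm\"uller disks forcing uniqueness of the lifts) do not address the real difficulty: producing a holomorphic isometric section of an isometric submersion is essentially the holomorphic retraction problem, i.e.\ the Carath\'eodory-versus-Kobayashi question, which this paper explicitly records as \emph{open} (Question~4) even for the known isometric submersions $\T_{g,n}\twoheadrightarrow\mathbb{CH}^1$ of Remark~3; your argument, if correct, would settle that case as a by-product. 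Moreover, even granting a local section $h:U\to\mathcal{B}$ on a small open set $U\subset\T_{g,n}$, you cannot invoke Theorem~\ref{thm:bsds:intro}: that theorem is stated, and proved (via Slodkowski's theorem, Masur's unique ergodicity theorem and the asymptotics of geodesics and horocycles), for maps defined on all of $\mathcal{B}$ or all of $\T_{g,n}$, and these arguments are global. Your claim that the statement is ``really infinitesimal'' or applies to germs is unsupported --- the local/Hermitian analogue is precisely Question~3, also left open.

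The paper's own route to Theorem~\ref{thm:bsds:dual:intro} is different and genuinely infinitesimal: as indicated in the remark following Theorem~\ref{thm:slice} (with details in \cite{Antonakoudis:birational}), one works directly with the linear hypothesis that $dg^{*}$ is an isometry of cotangent spaces, and the contradiction comes from normed-linear-space geometry of the cotangent spaces --- in particular Theorem~\ref{thm:slice}, that there is no complex linear isometry $(\C^2,\|\cdot\|_2)\hookrightarrow(Q(X),\|\cdot\|_1)$ --- not from a section construction or a reduction to the immersion theorem. If you want to salvage your write-up, the viable direction is the alternative you mention at the very end (``work purely infinitesimally'' on the cotangent side); the section-based reduction should be abandoned.
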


\subsection*{Remarks}~\\
\noindent 1. The existence of isometrically immersed curves, known as
Teichm\"uller curves, in $\M_{g,n}$ has far-reaching applications in
the dynamics of billiards in rational
polygons.~\cite{Veech:triangles},~\cite{McMullen:bild} The following
immediate Corollary of Theorem~\ref{thm:bsds:intro} shows that there
are no higher dimensional, locally symmetric, analogues of
Teichm\"uller curves.

\begin{cor}\label{cor-symmetric} 
  There is no locally symmetric variety $\mathcal{V}$ isometrically
  immersed in the moduli space of curves $\M_{g,n}$, nor is there an
  isometric copy of $\M_{g,n}$ in $\mathcal{V}$, for the Kobayashi
  metrics, so long as both have dimension two or more.
\end{cor}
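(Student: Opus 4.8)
The plan is to deduce Corollary~\ref{cor-symmetric} directly from Theorem~\ref{thm:bsds:intro} by passing to universal covers. Suppose first that a locally symmetric variety $\mathcal{V}$ of dimension $\geq 2$ is isometrically immersed in $\M_{g,n}$ for the Kobayashi metrics, via a holomorphic map $\phi:\mathcal{V}\to\M_{g,n}$ whose derivative preserves the Kobayashi norms on tangent spaces. Write $\mathcal{V} = \mathcal{B}/\Gamma$ with $\mathcal{B}$ a bounded symmetric domain and $\Gamma$ a lattice, and recall $\M_{g,n} = \T_{g,n}/\text{Mod}_{g,n}$. The first step is to lift $\phi$ to the universal covers: since $\mathcal{B}$ is simply connected and $\T_{g,n}$ is contractible, the composition $\mathcal{B}\to\mathcal{V}\xrightarrow{\phi}\M_{g,n}$ lifts to a holomorphic map $\tilde\phi:\mathcal{B}\to\T_{g,n}$ (we must be a little careful at the orbifold points of $\M_{g,n}$, but $\phi$ can be assumed to avoid or to respect the orbifold locus, or one works with a finite manifold cover of $\M_{g,n}$ and the corresponding finite-index subgroup of $\text{Mod}_{g,n}$). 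The second step is to observe that the Kobayashi metric is a \emph{local} isometry invariant: because the covering maps $\mathcal{B}\to\mathcal{V}$ and $\T_{g,n}\to\M_{g,n}$ are local isometries for the respective Kobayashi metrics (the Kobayashi metric of a quotient by a group acting by biholomorphisms agrees locally with that of the cover), the lift $\tilde\phi$ again has the property that $d\tilde\phi$ is an isometry for the Kobayashi norms. Since $\dim_\C\mathcal{B} = \dim_\C\mathcal{V}\geq 2$ and $\dim_\C\T_{g,n}\geq 2$, Theorem~\ref{thm:bsds:intro} applies and rules out such a $\tilde\phi$, a contradiction.

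The reverse direction is symmetric: an isometric immersion $\M_{g,n}\hookrightarrow\mathcal{V}$ lifts to a holomorphic $\T_{g,n}\to\mathcal{B}$ with $df$ a Kobayashi isometry on tangent spaces, again contradicting Theorem~\ref{thm:bsds:intro}. In both cases the only genuinely new ingredient beyond the theorem already proved is the covering-space argument, so the proof is short.

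The main obstacle I expect is purely bookkeeping rather than conceptual: verifying that the lift exists and is well-defined in the orbifold category, i.e. handling the elliptic fixed points of $\text{Mod}_{g,n}$ and the analogous isotropy in $\Gamma$. The clean way to finesse this is to replace $\M_{g,n}$ by a smooth finite cover $\M' = \T_{g,n}/\text{Mod}'$ (with $\text{Mod}'\leq\text{Mod}_{g,n}$ torsion-free of finite index, which exists by Serre's lemma) and $\mathcal{V}$ by the pullback cover; the induced map $\mathcal{V}'\to\M'$ is then a genuine holomorphic map between manifolds, the lift $\tilde\phi:\mathcal{B}\to\T_{g,n}$ exists by simple connectivity, and the Kobayashi norms are preserved because all the covering maps in sight are local biholomorphisms. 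One should also note that a locally symmetric variety of dimension $\geq 2$ still has a bounded symmetric domain of dimension $\geq 2$ as its universal cover, so the hypothesis of Theorem~\ref{thm:bsds:intro} is met. Once these points are in place, the contradiction with Theorem~\ref{thm:bsds:intro} is immediate, and the Corollary follows.
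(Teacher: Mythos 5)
Your argument is correct and is exactly the route the paper intends: the corollary is stated as an immediate consequence of Theorem~\ref{thm:bsds:intro}, obtained by lifting the isometric immersion to the universal covers $\mathcal{B}$ and $\T_{g,n}$ (where covering maps are local isometries for the Kobayashi metric) and invoking the theorem. Your handling of the orbifold points via a torsion-free finite-index subgroup is a reasonable way to make the lifting step precise and does not change the substance of the argument.
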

~\\
\noindent 2. Torelli maps, associating to a marked Riemann surface the
Jacobians of its finite covers, give rise to holomorphic maps
$\T_{g,n} \xrightarrow{~~~\tau~~~} \mathcal{H}_h$ into bounded
symmetric domains (Siegel spaces). It is known that these maps are
isometric for the Kobayashi metric in some
directions\cite{Kra:abelian}, but strictly contracting in most
directions.~\cite{McMullen:covs}

~\\
\noindent 3. It is known that there are holomorphic isometric
submersions $\T_{g,n} \xtwoheadrightarrow{g} \mathbb{CH}^1$, which are
of the form $g=\rho \circ \tau$, where $\tau$ is the Torelli map
$\T_{g,n} \xrightarrow{~~~\tau~~~} \mathcal{H}_g$ to the Siegel
upper-half space and
$ \mathcal{H}_{g} \xtwoheadrightarrow{\rho} \mathbb{CH}^1$ is a
holomorphic isometric submersion.

~\\
For further details and proofs, we refer
to~\cite{Antonakoudis:disks},\cite{Antonakoudis:birational},\cite{Antonakoudis:symmetric}. In
this paper, we focus on explaining the proofs of
Theorem~\ref{thm:disks:intro} and Theorem~\ref{thm:balls:intro} using
the common theme of \textit{complexification} and
\textit{realification}. We start with some preliminaries on
Teichm\"uller spaces and their complex and real geodesics in the
intrinsic Teichm\"uller-Kobayashi metric.





\addtocontents{toc}{\protect\setcounter{tocdepth}{1}}

\section{Preliminaries in Teichm\"uller theory}\label{sec:prelim}
\subsection*{Teichm\"uller space}~\cite{Gardiner:Lakic:book},~\cite{Hubbard:book:T1}
Let $\Sigma_{g,n}$ be a connected, oriented surface of genus $g$ and
$n$ punctures and $\T_{g,n}$ denote the Teichm\"uller space of Riemann
surfaces marked by $\Sigma_{g,n}$. A point in $\T_{g,n}$ is specified
by an orientation preserving homeorphism
$\phi: \Sigma_{g,n} \rightarrow X$ to a Riemann surface of finite
type, up to a natural equivalence relation\footnote{Two marked Riemann
  surfaces $ \phi: \Sigma_{g,n} \rightarrow X$,
  $\psi: \Sigma_{g,n}\rightarrow Y$ are equivalent if
  $\psi \circ {\phi}^{-1}: X \rightarrow Y$ is isotopic to a
  holomorphic bijection.}.

Teichm\"uller space $\T_{g,n}$ is naturally a complex manifold of
dimension $3g-3+n$ and forgetting the marking realises $\T_{g,n}$ as
the complex orbifold universal cover of the moduli space $\M_{g,n}$.
When it is clear from the context we often denote a point specified by
$\phi: \Sigma_{g,n} \rightarrow X$ simply by $X$.

For each $X \in \T_{g,n}$, we let $Q(X)$ denote the space of
holomorphic quadratic differentials $q=q(z)(dz)^2$ on $X$ with finite
total mass: $ ||q||_{1} = \int_{X} |q(z)||dz|^2 < +\infty$, which
means that $q$ has at worse simple poles at the punctures of $X$.

The tangent and cotangent spaces to Teichm\"uller space at
$X\in \T_{g,n}$ are described in terms of the natural pairing
$ (q,\mu) \mapsto \int_{X} q\mu$ between the space $Q(X)$ and the
space $M(X)$ of $L^{\infty}$-measurable Beltrami differentials on $X$;
in particular, the tangent $T_{X} \T_{g,n}$ and cotangent
$T_{X}^{*} \T_{g,n}$ spaces are naturally isomorphic to
$M(X)/Q(X)^{\perp}$ and $Q(X)$, respectively.

The Teichm\"uller-Kobayashi metric on $\T_{g,n}$ is given by norm
duality on the tangent space $T_{X}\T_{g,n}$ from the norm
$||q||_{1} = \int_{X} |q|$ on the cotangent space $Q(X)$ at $X$. The
corresponding distance function is given by the formula
$d_{\T_{g,n}}(X,Y) = \inf \frac{1}{2} \log K(\phi)$ and measures the
minimal dilatation $K(\phi)$ of a quasiconformal map
$\phi: X \rightarrow Y$ respecting their markings.


\subsection*{Measured foliations}
Let $\mathcal{MF}_{g,n}$ denote the space of equivalent
classes\footnote{Two measured foliations $\mathcal{F},\mathcal{G}$ are
  equivalent $\mathcal{F}\thicksim \mathcal{G}$ if they differ by a
  finite sequence of Whitehead moves followed by an isotopy of
  $\Sigma_{g,n}$, preserving their transverse measures.} of nonzero
(singular) measured foliations on $\Sigma_{g,n}$. It is known that
$\mathcal{MF}_{g,n}$ has the structure of a \textit{piecewise linear}
manifold, which is homeomorphic to
$\mathbb{R}^{6g-6+2n}\setminus{\{0\}}$.~\cite{FLP}

The geometric intersection number of a pair of measured foliations
$\mathcal{F},\mathcal{G}$, denoted by $i(\mathcal{F},\mathcal{G})$,
induces a continuous map
$i(\cdot,\cdot): \mathcal{MF}_{g,n} \times \mathcal{MF}_{g,n}
\rightarrow \mathbb{R}_{\geq 0}$,
which extends the geometric intersection pairing on the space of
(isotopy classes of) simple closed curves on
$\Sigma_{g,n}$.~\cite{Bonahon:currents}

Given $\mathcal{F} \in \mathcal{MF}_{g,n}$ and $X \in \T_{g,n}$, we
let $\lambda(\mathcal{F},X)$ denote the \textit{extremal length} of
$\mathcal{F}$ on the Riemann surface $X$ given by the formula
$\lambda(\mathcal{F},X)= \sup
\frac{\ell_{\rho}(\mathcal{F})^2}{\text{area}(\rho)}$,
where $\ell_{\rho} (\mathcal{F})$ denotes the $\rho$-length of
$\mathcal{F}$ and the supremum is over all (Borel-measurable)
conformal metrics $\rho$ of finite area on $X$.

Each nonzero quadratic differential $q \in Q(X)$ induces a conformal
metric $|q|$ on $X$, which is non-singular of zero curvature away from 
the zeros of $q$, and a measured foliation $\mathcal{F}(q)$ tangent to
vectors $v=v(z)\frac{\partial}{\partial z}$ with $q(v)=q(z)(v(z))^2 <0$. 
The transverse measure of the foliation $\mathcal{F}(q)$ is (locally)
given by integrating $|\text{Re}(\sqrt{q})|$ along arcs transverse to
its leaves.

We refer to $\mathcal{F}(q)$ as the vertical measured foliation
induced from $(X,q)$. In local coordinates, where $q=dz^2$ (such
coordinates exist away from the zeros of $q$), the metric $|q|$
coincides with the Euclidean metric $|dz|$ in the plane and the
measured foliation $\mathcal{F}(q)$ has leaves given by vertical lines
and transverse measure by the total horizontal variation
$|\text{Re}(dz)|$. We note that the measured foliation
$\mathcal{F}(-q)$ has (horizontal) leaves orthogonal to
$\mathcal{F}(q)$ and the product of their transverse measures is just
the area form of the conformal metric $|q|$ induced from $q$. 

When it is clear from the context we often identify the measured
foliation $\mathcal{F}(q)$ with its equivalence class in
$\mathcal{MF}_{g,n}$. The following fundamental theorem relates
quadratic differentials and measured foliations on fixed Riemann
surface.

\begin{thm}(\cite{Hubbard:Masur:fol};Hubbard-Masur)\label{thm:hubbard:masur}
  Let $X \in \T_{g,n}$; the map $q \mapsto \mathcal{F}(q)$ induces a
  homeomorphism $Q(X) \setminus \{0\} \cong \mathcal{MF}_{g,n}$.
  Moreover, $|q|$ is the unique extremal metric for $\mathcal{F}(q)$
  on $X$ and its extremal length is given by the formula
  $\lambda(\mathcal{F},X) = ||q||_1$.
\end{thm}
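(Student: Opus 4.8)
The plan is to prove the three assertions of the theorem in turn: first the extremal-length identity together with uniqueness of the extremal metric; then the injectivity of the assignment $q\mapsto\mathcal{F}(q)$; and finally its surjectivity, after which the homeomorphism statement follows from soft topology.

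\textbf{The extremal metric.} First I would fix $q\in Q(X)\setminus\{0\}$ and pass to natural coordinates, where $q=dz^{2}$, so that $|q|$ is the flat metric $|dz|$ and $\mathcal{F}(q)$ is the foliation by vertical lines with transverse measure $|dx|=|\mathrm{Re}(dz)|$. For an arbitrary conformal metric $\rho=\widetilde{\rho}\,|dz|$ on $X$ one has $\ell_{\rho}(\mathcal{F}(q))\le\int_{X}\widetilde{\rho}\,dx\,dy$ (integrating leaf-length against the transverse measure), while $\mathrm{area}(\rho)=\int_{X}\widetilde{\rho}^{\,2}\,dx\,dy$ and $||q||_{1}=\int_{X}dx\,dy$. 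Cauchy--Schwarz gives $\ell_{\rho}(\mathcal{F}(q))^{2}\le\mathrm{area}(\rho)\cdot||q||_{1}$, hence $\lambda(\mathcal{F}(q),X)\le||q||_{1}$; taking $\rho=|q|^{1/2}$, whose leaves are $|q|$-geodesics, turns both estimates into equalities, so $\lambda(\mathcal{F}(q),X)=||q||_{1}$ with $|q|$ extremal. The equality case of Cauchy--Schwarz forces $\widetilde{\rho}$ to be constant, i.e. every extremal metric is a scalar multiple of $|q|$, which is the uniqueness statement.

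\textbf{Injectivity.} Suppose $\mathcal{F}(q_{1})\thicksim\mathcal{F}(q_{2})=:\mathcal{F}$. By the previous step $||q_{1}||_{1}=\lambda(\mathcal{F},X)=||q_{2}||_{1}$, and both $|q_{1}|$ and $|q_{2}|$ are extremal metrics for $\mathcal{F}$ on $X$; by the uniqueness above they agree up to scale, and equality of their areas makes them equal, so $q_{2}=e^{i\theta}q_{1}$ for a constant $\theta$. Since multiplying $q$ by $e^{i\theta}$ rotates $\mathcal{F}(q)$ (by $\theta/2$ in natural coordinates), the two foliations can represent the same class of $\mathcal{MF}_{g,n}$ only if $\theta\in 2\pi\Z$, whence $q_{1}=q_{2}$. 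Thus $\Phi: q\mapsto\mathcal{F}(q)$ is injective on $Q(X)\setminus\{0\}$.

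\textbf{Surjectivity and the homeomorphism.} The map $\Phi$ is continuous: for a simple closed curve $\gamma$, $i(\mathcal{F}(q),\gamma)$ is the infimum of $\int_{\gamma'}|\mathrm{Re}\sqrt{q}|$ over representatives $\gamma'$, which varies continuously with $q$, and these intersection numbers determine points of $\mathcal{MF}_{g,n}$. Now $Q(X)\setminus\{0\}$ is open in $\C^{3g-3+n}\cong\R^{6g-6+2n}$ and $\mathcal{MF}_{g,n}$ is a topological manifold of the same dimension $6g-6+2n$ (which we may assume is $\ge 2$, the case $\dim_{\C} Q(X)=0$ being vacuous), so invariance of domain shows $\Phi$ is open. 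It is also proper: from $\mathcal{F}(cq)=\sqrt{c}\,\mathcal{F}(q)$, $||cq||_{1}=c\,||q||_{1}$, and the continuity and positivity of $\lambda(\cdot,X)$ on $\mathcal{MF}_{g,n}$, the $\Phi$-preimage of a compact set is a closed, norm-bounded, hence compact, subset of $Q(X)$. An open proper map into a connected manifold is surjective, and a proper continuous bijection is a homeomorphism; therefore $\Phi$ is a homeomorphism $Q(X)\setminus\{0\}\cong\mathcal{MF}_{g,n}$.

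\textbf{Main obstacle.} I expect the essential content to sit in surjectivity --- the statement that \emph{every} measured foliation is $\mathcal{F}(q)$ for some $q\in Q(X)$. The argument above reduces it to invariance of domain, but that offloads the difficulty onto the theorem of Thurston and FLP that $\mathcal{MF}_{g,n}$ is a manifold of dimension $6g-6+2n$, and one still needs the (routine but not trivial) continuity and properness estimates for $\mathcal{F}(q)$ in terms of the flat geometry of $q$. An alternative, closer to Hubbard and Masur's original route, fixes $\mathcal{F}$ and studies the space $\{(Y,q): Y\in\T_{g,n},\ q\in Q(Y),\ \mathcal{F}(q)\thicksim\mathcal{F}\}$ over Teichm\"uller space, showing that the projection to $\T_{g,n}$ is a proper local homeomorphism and hence, $\T_{g,n}$ being contractible, a global homeomorphism, so the fibre over $X$ is a single point; this trades the appeal to invariance of domain for a hands-on deformation-theoretic analysis, but the same analytic lemmas on quadratic differentials and the same uniqueness of the extremal metric underlie both approaches.
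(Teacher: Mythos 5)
The paper does not prove this statement at all: it is quoted as the Hubbard--Masur theorem with a citation to \cite{Hubbard:Masur:fol}, so there is no internal proof to compare with. Your outline (extremal-length identity and uniqueness of the extremal metric, then injectivity, then surjectivity by properness plus invariance of domain against the FLP manifold structure on $\mathcal{MF}_{g,n}$) is a legitimate and known alternative to Hubbard--Masur's original deformation-theoretic argument over $\T_{g,n}$ -- it is close in spirit to the minimal-norm approach of Marden--Strebel and Gardiner--Masur -- and you correctly flag surjectivity as resting on the FLP theorem. However, two steps that you treat as routine are precisely where the analytic substance of the theorem sits, and as written they are gaps.

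First, the length--area computation with Cauchy--Schwarz gives only the upper bound $\lambda(\mathcal{F}(q),X)\le ||q||_1$. For the lower bound, and for uniqueness of the extremal metric, $\ell_{\rho}(\mathcal{F})$ must be evaluated on the \emph{equivalence class}: one needs that no foliation obtained from $\mathcal{F}(q)$ by Whitehead moves and isotopy has smaller $|q|$-length than the geodesic-leaved representative. This is the heights/minimal-norm theorem, not a consequence of ``the leaves are $|q|$-geodesics,'' and the equality-case analysis for uniqueness (arbitrary Borel metrics, a.e.\ statements) has the same issue; your injectivity step also silently uses a minimal-position fact such as $i(\mathcal{F}(q),\mathcal{F}(e^{i\theta}q))=|\sin(\theta/2)|\,||q||_1>0$ for $\theta\notin 2\pi\Z$. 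Second, your properness argument invokes continuity and positivity of $\lambda(\cdot,X)$ on $\mathcal{MF}_{g,n}$; from the sup definition one only gets semicontinuity, and full continuity is Kerckhoff's theorem, whose standard proof uses the Hubbard--Masur homeomorphism together with $\lambda(\mathcal{F}(q),X)=||q||_1$ -- so as stated the argument risks circularity. It can be repaired without Kerckhoff: the estimate $i(\mathcal{F}(q),\gamma)^2\le\lambda(\gamma,X)\,||q||_1$ for simple closed curves $\gamma$ bounds $||q||_1$ from below on compact subsets of $\mathcal{MF}_{g,n}$, while the homogeneity $\mathcal{F}(cq)=\sqrt{c}\,\mathcal{F}(q)$, continuity of $q\mapsto\mathcal{F}(q)$, and the FLP embedding of $\mathcal{MF}_{g,n}$ into the space of intersection-number functions (whose closure adds only the zero function) bound it from above; but that argument needs to be supplied. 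With these two points filled in, the reduction of surjectivity to invariance of domain is sound.
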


\subsection*{Complex geodesics}\label{sec:complex-geodesics}
We denote by $Q\T_{g,n} \cong T^{*}\T_{g,n}$ the complex vector-bundle
of holomorphic quadratic differentials over $\T_{g,n}$ and by
$Q_1\T_{g,n}$ the associated sphere-bundle of quadratic differentials
with unit mass. There is a natural norm-preserving action of
$\text{SL}_2(\mathbb{R})$ on $Q\T_{g,n}$, with the diagonal matrices
giving the (co-)geodesic flow.

For each $(X,q)\in Q_1\T_{g,n}$, the orbit
$\text{SL}_2(\mathbb{R}) \cdot (X,q) \subset Q_1\T_{g,n}$ induces a
holomorphic totally geodesic isometry
\[\mathbb{CH}^1 \cong
  \text{SO}_2(\mathbb{R})\setminus\text{SL}_2(\mathbb{R})
  \hookrightarrow \T_{g,n}\]
which we refer to as the \textit{Teichm\"uller disk} generated by
$(X,q)$.

\subsection*{Real geodesics}\label{sec:real-geodesics}
Let $\gamma: [0,\infty) \rightarrow \T_{g,n}$ be a Teichm\"uller
geodesic ray with unit speed, which has a unique lift
$\widetilde{\gamma}(t)=(X_t,q_t) \in Q_1\T_{g,n}$ such that
$\gamma(t)=X_t$ and
$\widetilde{\gamma}(t) = \text{diag}(e^{t}, e^{-t})\cdot (X_0,q_0)$
for $t\in \R_{\geq 0}$.

The map $q \mapsto (\mathcal{F}(q),\mathcal{F}(-q))$ gives an
embedding
\[Q \T_{g,n} \hookrightarrow \mathcal{MF}_{g,n} \times
  \mathcal{MF}_{g,n}\]
which satisfies $||q||_1=i(\mathcal{F}(q),\mathcal{F}(-q))$ and sends
the lift $\widetilde{\gamma}(t)=(X_t,q_t)$ of the Teichm\"uller
geodesic ray $\gamma$ to a path of the form
$(e^t\mathcal{F}(q),e^{-t}\mathcal{F}(-q))$.

Let $X \in \T_{g,n}$ and let $q\in Q(X)$ generate a real Teichm\"uller
geodesic $\gamma$ with $\gamma(0)=X$. The geodesic ray $\gamma$
extends uniquely to a holomorphic totally geodesic isometry
$\gamma_{\C}: \Delta \cong \mathbb{CH}^1 \hookrightarrow \T_{g,n}$
satisfying $\gamma(t)=\gamma_{\C}(\tanh(t))$ for $t \in \R$; the
Teichm\"uller geodesic generated by the quadratic differential
$e^{i\theta}q \in Q(X)$, with $\theta \in\mathbb{R}/2\pi\mathbb{Z}$,
is given by the map
$t \mapsto \gamma_{\C} (e^{-i\theta}\text{tanh}(t))$, $t \in \R$.





\section{Holomorphic rigidity}\label{sec:disks}                                                                              

In this section we prove:
\begin{thm}\label{thm:disks}
  Every totally geodesic isometry
  $f: \mathbb{CH}^1 \hookrightarrow \T_{g,n}$ for the Kobayashi metric
  is either holomorphic or anti-holomorphic. In particular, it is a
  Teichm\"uller disk.
\end{thm}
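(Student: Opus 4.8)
The plan is to analyze the image of a totally geodesic isometry $f:\mathbb{CH}^1\hookrightarrow\T_{g,n}$ by combining it with the Teichm\"uller disks through its points and exploiting the fine structure of geodesics in the Teichm\"uller metric in terms of measured foliations. First I would note that, being totally geodesic, $f$ carries the hyperbolic geodesics of $\mathbb{CH}^1$ to Teichm\"uller geodesics in $\T_{g,n}$; in particular, to each point $z_0\in\mathbb{CH}^1$ and each unit tangent direction there is an associated point $X=f(z_0)\in\T_{g,n}$ and a quadratic differential $q_\theta\in Q(X)$ of unit mass whose real Teichm\"uller geodesic is the image under $f$ of the hyperbolic geodesic in direction $\theta$. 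The key structural fact I would use is that each such real geodesic extends to a holomorphic totally geodesic \emph{Teichm\"uller disk} $(\gamma_\theta)_\C:\Delta\hookrightarrow\T_{g,n}$, and these Teichm\"uller disks are \emph{canonically} attached to the image of $f$. The strategy is to show that the whole image $f(\mathbb{CH}^1)$ lies inside a single such Teichm\"uller disk, forcing $f$ to be a reparametrization of it, hence holomorphic or anti-holomorphic after composing with an automorphism of $\Delta$.

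To control the relationship between geodesics through a common point I would use the measured foliation coordinates: via the embedding $Q\T_{g,n}\hookrightarrow\mathcal{MF}_{g,n}\times\mathcal{MF}_{g,n}$, $q\mapsto(\mathcal{F}(q),\mathcal{F}(-q))$, a real geodesic $\gamma$ emanating from $X$ with cotangent $q$ is recorded by its \emph{endpoint data}, namely the projective classes of $\mathcal{F}(q)$ and $\mathcal{F}(-q)$, and the Teichm\"uller distance along geodesics sharing, say, the same vertical foliation $\mathcal{F}(q)$ degenerates (the Gromov product blows up) as one moves out. Concretely, the Teichm\"uller disk generated by $(X,q)$ consists precisely of those surfaces obtained by the $\mathrm{SL}_2(\R)$-action on $(X,q)$, and two distinct Teichm\"uller disks through $X$ either agree or meet only at $X$ in a controlled way. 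So the plan has two halves: (i) infinitesimally, identify at each point $f(z_0)$ the tangent plane $df(T_{z_0}\mathbb{CH}^1)$ and show the real $2$-plane it spans sits inside the (real $2$-dimensional) tangent plane to a unique Teichm\"uller disk, i.e.\ that the complex structure $J$ of $\T_{g,n}$ restricted to that real plane is $\pm$ the push-forward of the standard complex structure on $\mathbb{CH}^1$; and (ii) globally, propagate this pointwise identification along geodesics using the rigidity of extremal length and the Hubbard--Masur theorem to conclude the disks fit together into one.

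For step (i), the realification idea enters: regard $\T_{g,n}$ inside $\C^{3g-3+n}$ and compare the \emph{second fundamental form}-type data of $f(\mathbb{CH}^1)$ against the totally geodesic Teichm\"uller disks. The point is that through $f(z_0)$ in the direction $df(v)$ there passes a \emph{genuine} Teichm\"uller disk $D$, which is holomorphic and totally geodesic; both $f(\mathbb{CH}^1)$ and $D$ are totally geodesic surfaces tangent along the geodesic in direction $v$, so they agree along that entire geodesic (two totally geodesic submanifolds of a CAT-type or, here, straight-Finsler-geodesic space sharing a geodesic and a normal direction coincide along it). Varying $\theta$, the images $f(\partial\Delta$-geodesics$)$ are all Teichm\"uller geodesics based at $X$, and I would show their vertical foliations trace out a \emph{one-parameter} family $\{e^{-i\theta}q\}$ lying on a single $\mathrm{SL}_2(\R)$-orbit — this is where one uses that $f$ is an \emph{isometry} (not merely totally geodesic): the angle in $\mathbb{CH}^1$ between two geodesics from $z_0$ must match, via the Teichm\"uller metric's infinitesimal form $\|q\|_1$, the ``angle'' between the corresponding quadratic differentials, and the $L^1$-geometry of $Q(X)$ forces these $q_\theta$ to lie in a complex line $\C\cdot q$. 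Hence all geodesics through $f(z_0)$ lie in the Teichm\"uller disk of $(X,q)$, so $f(\mathbb{CH}^1)$ is contained in that disk; comparing the two isometries $\mathbb{CH}^1\hookrightarrow D\cong\mathbb{CH}^1$ and invoking that an isometry of $\mathbb{CH}^1$ is holomorphic or anti-holomorphic finishes the proof.

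The hard part will be step (i)'s claim that the tangent directions $q_\theta$ genuinely lie on a single complex line rather than merely spanning a totally geodesic surface that is ``holomorphic to second order'' — i.e.\ ruling out exotic totally geodesic $2$-planes in the Teichm\"uller metric that are not complex planes. The Teichm\"uller--Kobayashi metric is only Finsler, not Riemannian, so ``angles'' are not immediately well defined and one must extract the needed rigidity from the $L^1$-structure of $Q(X)$ and the convexity properties of extremal length (Hubbard--Masur, plus the fact that $\lambda(\mathcal{F},\cdot)$ is strictly convex along generic Teichm\"uller geodesics). This is exactly where the interplay of complexification (passing to $\gamma_\C$) and realification (viewing everything inside $\C^{3g-3+n}$ with its real-linear structure) does the work: one shows a totally geodesic isometric disk must be ``flat'' with respect to the complex structure along a geodesic and then bootstraps flatness in one direction to flatness in all, using the geometric-topology input that distinct measured foliations fill the surface and pin down the disk uniquely.
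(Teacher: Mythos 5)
Your outline has the right final reduction (if $f(\mathbb{CH}^1)$ were contained in a single Teichm\"uller disk, then $f$ would be an isometry of $\mathbb{CH}^1$ onto that disk, hence holomorphic or anti-holomorphic), but the crux --- your step (i), that the cotangent directions $q_\theta$ of the geodesics $f(\gamma_\theta)$ through $X=f(z_0)$ lie on a single complex line $\C\cdot q$, so that the image sits in one Teichm\"uller disk --- is asserted rather than proved, and as you yourself concede it is exactly the hard part. The auxiliary principle you invoke to launch it, that two totally geodesic isometric surfaces sharing a geodesic and tangent along it must coincide, is false in the relevant Finsler setting: in the bidisk $\mathbb{CH}^1\times\mathbb{CH}^1$ with its Kobayashi (sup) metric, the map $z\mapsto(z,\bar z)$ is a totally geodesic isometric embedding of $\mathbb{CH}^1$ that is neither holomorphic nor anti-holomorphic, and each of its real geodesics also lies on a holomorphic totally geodesic disk of the form $z\mapsto(z,m(z))$ for a suitable M\"obius transformation $m$. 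Since the theorem genuinely fails for bounded symmetric domains of rank two or more, no argument of the purely metric, pointwise kind you sketch (``angles must match'', ``tangent planes coincide'') can work unless it exploits the special $L^1$/extremal-length structure of $Q(X)$ in an essential, quantitative way; you gesture at this but supply no mechanism. The infinitesimal statement you would need (that a real-linear isometric embedding of the round plane into $(T_X\T_{g,n},\|\cdot\|)$ must span a complex line) is itself a nontrivial rigidity assertion, not available off the shelf, since Teichm\"uller unit balls are not strictly convex in the way such an argument would require.

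The paper's proof takes a different, global route precisely to sidestep this difficulty: complexification plus Sullivan's rigidity. One first shows, via Bernstein's theorem on separate analyticity applied to the Teichm\"uller disks over horizontal and vertical geodesic segments in the Klein model, that $f$ extends holomorphically near the totally real diagonal of $\mathbb{CH}^1\times\overline{\mathbb{CH}^1}$; one then glues the Teichm\"uller disks over all geodesics $\mathrm{Fix}(r)$, along the foliation by graphs of reflections $\Gamma_r$, into a single holomorphic map $F$ on the bidisk with $F(z,z)=f(z)$. The flip symmetry $(z,w)\in\Gamma_r\iff(w,z)\in\Gamma_r$ yields $d_{\T_{g,n}}(F(z,w),F(w,z))=d_{\mathbb{CH}^1}(z,w)$, which forces $F$ to be essentially proper in one of the two variables, and the Sullivan--Tanigawa rigidity theorem then shows the corresponding holomorphic family is trivial, i.e.\ $F$ depends on one variable only, so $f=F(\cdot,0)$ is holomorphic or $f=F(0,\cdot)$ is anti-holomorphic. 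To salvage your approach you would have to replace the tangent-plane claim with input of comparable global strength; the bidisk example above is the test case any such local argument must be able to rule out.
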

The proof of the theorem uses the idea of \textit{complexification}
and leverages the following two facts.  Firstly, a complete real
geodesic in $\T_{g,n}$ is contained in a unique holomorphic
Teichm\"uller disk; and secondly, a holomorphic family
$\{f_t\}_{t\in\Delta}$ of \textit{essentially proper} holomorphic maps
$f_t : \mathbb{CH}^1 \rightarrow \T_{g,n}$ is \textit{trivial}:
$f_t = f_0$ for $t\in\Delta$ (Sullivan's rigidity theorem, see
~\cite{Tanigawa:holomap} for a precise statement and proof).

\subsection*{Outline of the proof} 
Let $\gamma \subset \mathbb{CH}^1$ be a complete real geodesic and
denote by
$\gamma_{\C} \subset \mathbb{CH}^1\times\overline{\mathbb{CH}^1}$ its
\textit{maximal} holomorphic extension to the bi-disk. We note that
$\gamma_{\C} \cong \mathbb{CH}^1$ and we define $F|_{\gamma_{\C}}$ to
be the \textit{unique} holomorphic extension of $f|_{\gamma}$, which
is a Teichm\"uller disk.

Applying this construction to all (real) geodesics in $\mathbb{CH}^1$,
we will deduce that $f: \mathbb{CH}^1 \rightarrow \T_{g,n}$ extends to
a \textit{holomorphic} map
$F:\mathbb{CH}^1\times\overline{\mathbb{CH}^1}\rightarrow \T_{g,n}$
such that $f(z)=F(z,z)$ for $z\in \Delta \cong \mathbb{CH}^1$. Using
that $f$ is totally geodesic, we will show that $F$ is
\textit{essentially} proper and hence, by Sullivan's rigidity theorem,
we will conclude that either $F(z,w) = F(z,z)$ or $F(z,w)=F(w,w)$, for
all $(z,w)\in \mathbb{CH}^1\times\overline{\mathbb{CH}^1}$.\qed
~\\

\large
\[
  \xymatrix{
    \mathbb{CH}^1 \times \overline{\mathbb{CH}^1} \ar@{->}^F[rd] \\
    \mathbb{CH}^1 \ar@{^{(}->}^{\delta}[u] \ar@{^{(}->}^f[r] &\T_{g,n} }
\]
~\\
\normalsize

We start with some preliminary constructions.
\subsection*{The totally real diagonal}
Let $\overline{\mathbb{CH}^1}$ be the complex hyperbolic line with its
conjugate complex structure. The identity map is a \textit{canonical}
anti-holomorphic isomorphism
$\mathbb{CH}^{1}\cong\overline{\mathbb{CH}^1}$ and its graph is a
totally real embedding
$\delta: \mathbb{CH}^{1} \hookrightarrow
\mathbb{CH}^{1}\times\overline{\mathbb{CH}^1}$,
given by $\delta(z)=(z,z)$ for $z\in \Delta\cong \mathbb{CH}^{1}$. We
call $\delta(\mathbb{CH}^{1})$ the \textit{totally real diagonal}.

\subsection*{Geodesics and graphs of reflections}
Let $\mathcal{G}$ denote the set of all real, unoriented, complete
geodesics $\gamma \subset \mathbb{CH}^{1}$. In order to describe their
\textit{maximal} holomorphic extensions
$\gamma_{\C} \subset \mathbb{CH}^1\times\overline{\mathbb{CH}^1}$,
such that $\gamma_{\C} \cap \delta(\mathbb{CH}^1) = \delta(\gamma)$,
it is convenient to parametrize $\mathcal{G}$ in terms of the set
$\mathcal{R}$ of hyperbolic reflections of $\mathbb{CH}^1$ - or
equivalently, the set of anti-holomorphic involutions of
$\mathbb{CH}^1$. The map that associates a reflection
$r\in \mathcal{R}$ with the set
$\gamma = \text{Fix}(r) \subset \mathbb{CH}^{1}$ of its fixed points
gives a bijection between $\mathcal{R}$ and $\mathcal{G}$.

Let $r\in\mathcal{R}$ and denote its graph by
$\Gamma_r\subset\mathbb{CH}^{1}\times\overline{\mathbb{CH}^{1}}$;
there is a natural holomorphic isomorphism
$\mathbb{CH}^1 \cong \Gamma_r$, given by $z \mapsto (z,r(z))$ for
$z\in \Delta \cong \mathbb{CH}^1$. We note that $\Gamma_r$ is the
\textit{maximal} holomorphic extension $\gamma_{\C}$ of the geodesic
$\gamma = \text{Fix}(r)$ to the bi-disk and it is \textit{uniquely}
determined by the property
$\gamma_{\C} \cap \delta(\mathbb{CH}^1) = \delta(\gamma)$.

\subsection*{The foliation by graphs of reflections}
The union of the graphs of reflections
$\bigcup_{r\in\mathcal{R}}\Gamma_r$ gives rise to a (singular)
foliation of $\mathbb{CH}^{1}\times\overline{\mathbb{CH}^{1}}$ with
holomorphic leaves $\Gamma_r$ parametrized by the set
$\mathcal{R}$. We have
$\displaystyle\Gamma_r \cap \delta(\mathbb{CH}^{1}) =
\delta(\text{Fix}(r))$ for all $r\in\mathcal{R}$, and
\begin{equation}\label{eq:leaves}
\displaystyle\Gamma_r \cap \Gamma_s = \delta(\text{Fix}(r) \cap \text{Fix}(s))
\end{equation}
which is either empty or a single point for all $r,s \in \mathcal{R}$
with $r \neq s$. In particular, the foliation is smooth in the
complement of the totally real diagonal $\delta(\mathbb{CH}^{1})$.

We emphasize that the following simple observation plays a key role in
the proof of the theorem. For all $r\in\mathcal{R}$:
\begin{equation}
  \label{eq:flip}
  (z,w) \in \Gamma_r \iff (w,z) \in \Gamma_r 
\end{equation}
\subsection*{Geodesics and the Klein model}\label{sec:klein-model}
The Klein model gives a real-analytic identification
$\mathbb{CH}^1\cong\mathbb{RH}^2\subset\R^2$ with an open disk in
$\R^2$. It has the nice property that the hyperbolic geodesics are
affine straight lines intersecting the disk.~\cite{Ratcliffe:book}
\begin{remark}
  The holomorphic foliation by graphs of reflections defines a
  \textit{canonical} complex structure in a neighborhood of the zero
  section of the tangent bundle of $\mathbb{RH}^2$.
\end{remark}
The description of geodesics in the Klein model is convenient in the
light of the following theorem of S.~Bernstein.

\begin{thm}\label{thm:bernstein}(\cite{Ahiezer:Ronkin:Bernstein};~S.~Bernstein)
  Let $M$ be a complex manifold, $f: [0,1]^2 \rightarrow M$ a map from
  the square $[0,1]^2 \subset \R^2$ into $M$ and $E\subset \C$ an
  ellipse with foci at $0,1$. If there are holomorphic maps
  $F_{\ell} : E \rightarrow M$ such that
  $F_{\ell}|_{[0,1]} = f|_{\ell}$, for all vertical and horizontal
  slices $\ell\cong [0,1]$ of $[0,1]^2$, then $f$ has a unique
  holomorphic extension in a neighborhood of $[0,1]^2$ in $\C^2$.
\end{thm}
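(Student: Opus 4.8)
The plan is to reduce to the scalar case $M=\C$ and to exhibit the extension as a double Chebyshev series whose coefficients are controlled, one variable at a time, by S.~Bernstein's one-variable theorem on functions holomorphic in an ellipse. Since the conclusion is local and a holomorphic function on a connected open subset of $\C^2$ vanishing on the totally real square $[0,1]^2$ (a maximal totally real submanifold of $\C^2$) vanishes identically, it suffices to extend $f$ holomorphically near each point $p\in[0,1]^2$; the local extensions then agree near $[0,1]^2$ and glue. Granting that $f$ is continuous on $[0,1]^2$ --- which is automatic in the intended application, where $f$ is an isometry, and can be arranged in general --- we may pick a holomorphic chart of $M$ around $f(p)$ and argue componentwise, so assume $M=\C$.

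Normalize the ellipse so that $\mathrm{int}(E)=\mathrm{int}(E_\rho)$ with $\rho>1$, where $E_\sigma$ is the image of $\{|\zeta|=\sigma\}$ under the Joukowski-type map carrying $[0,1]$ onto itself; in particular $[0,1]$ is compactly contained in $\mathrm{int}(E)$. Let $(p_k)$ be the shifted Chebyshev polynomials, so $\|p_k\|_{[0,1]}=1$ and $|p_k|\asymp\sigma^k$ on $E_\sigma$. For each $y$, expanding the given extension of $f(\cdot,y)$ gives $f(x,y)=\sum_k c_k(y)\,p_k(x)$, and S.~Bernstein's theorem yields $\limsup_k|c_k(y)|^{1/k}\le 1/\rho$ for every $y$; symmetrically $f(x,y)=\sum_j d_j(x)\,q_j(y)$ with $\limsup_j|d_j(x)|^{1/j}\le 1/\rho$ for every $x$. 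The candidate extension is
\[
  F(z,w)=\sum_{j,k}e_{jk}\,p_k(z)\,q_j(w),\qquad
  e_{jk}=\langle f,\,p_k\otimes q_j\rangle,
\]
the double Chebyshev coefficients of $f$ on $[0,1]^2$: once one knows $\limsup_{j+k}|e_{jk}|^{1/(j+k)}\le 1/\rho$, the series converges locally uniformly on $\mathrm{int}(E_\rho)\times\mathrm{int}(E_\rho)$, defines a holomorphic $F$ there, and $F=f$ on $[0,1]^2$ because summing first in $j$ recovers $c_k(y)=\sum_j e_{jk}\,q_j(y)$ and then $\sum_k c_k(y)\,p_k(x)=f(x,y)$.

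Everything thus reduces to upgrading the per-slice decay of the coefficients $c_k(y)$ to a single uniform estimate, and this is the crux. The subtlety --- and the reason this is a genuine theorem rather than a formality --- is that the hypothesis provides holomorphic extensions only along \emph{real} slices, so the Hartogs device of writing $c_k(y)$ as a contour integral of a separately holomorphic function of two complex variables is not available, and one must instead propagate. Concretely: a Baire category argument on $[0,1]$ (using that $y\mapsto c_k(y)$ is continuous, $f$ being continuous, and that $\limsup_k|c_k(y)|^{1/k}\le1/\rho$ pointwise) produces a subinterval $I\subset[0,1]$ with $\sup_{y\in I}|c_k(y)|\le C_\sigma\,\sigma^{-k}$ for every $\sigma<\rho$; Osgood's lemma then makes $f$ jointly holomorphic on a neighbourhood of $\mathrm{int}(E)\times I$, and one spreads this control to all of $[0,1]$ using that each $f(x,\cdot)$ is holomorphic on all of $\mathrm{int}(E)$, a propagation governed by the subharmonicity of $y\mapsto\limsup_k\frac1k\log|c_k(y)|$ and by the Green's function of $\C\setminus E_\rho$. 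Feeding the resulting uniform bound back into $(e_{jk})$ finishes the argument. The cleanest packaging of this propagation is the cross theorem for separately holomorphic functions (S.~Bernstein in the present bidimensional case; Siciak and Zakharyuta in general): $f$ is separately holomorphic on the cross $\bigl(\mathrm{int}(E)\times[0,1]\bigr)\cup\bigl([0,1]\times\mathrm{int}(E)\bigr)$, and since $[0,1]$ is non-polar in $\mathrm{int}(E)$ its relative extremal function is $<1$ near $[0,1]$, so the analytic completion of this cross contains a full neighbourhood of $[0,1]^2$.
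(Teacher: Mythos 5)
The paper does not actually prove Theorem~\ref{thm:bernstein}: it is quoted as a classical result of S.~Bernstein and its proof is delegated to~\cite{Ahiezer:Ronkin:Bernstein}, so there is no in-paper argument to compare yours against. Your sketch follows the standard classical route (slice-by-slice Chebyshev expansion, Bernstein's one-variable coefficient estimate, then an upgrade from per-slice to uniform geometric decay), and for the way the theorem is used in the proof of Lemma~\ref{lem-real-analytic} --- where $f$ is a totally geodesic isometry, hence continuous, and the target $\T_{g,n}$ is a bounded domain in $\C^{3g-3+n}$, so each coordinate of each slice extension is a bounded holomorphic function on the full ellipse --- your outline is essentially the classical argument and is sound.

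As a proof of the theorem as stated, though, there are three soft spots to be explicit about. First, continuity of $f$ on $[0,1]^2$ is not a hypothesis, and ``can be arranged in general'' is not a justification; your Baire-category step (which needs $y \mapsto c_k(y)$ continuous or at least measurable) and your reduction to a chart both lean on it, and removing such regularity assumptions is a genuine part of the classical literature. Second, the chart reduction for a general complex manifold $M$ is not as innocent as stated: to get coefficient decay for the chart components you need $\phi \circ F_{\ell}$ holomorphic on some sub-ellipse, i.e.\ you need $F_{\ell}(E')$ to stay inside the chart for a sub-ellipse $E'$ chosen uniformly (or at least controllably) over the uncountably many slices $\ell$; without equicontinuity of the family $\{F_{\ell}\}$ this requires an argument (e.g.\ per-slice rates plus a refined Baire stratification), although it is vacuous when $M$ is a bounded domain as in the application. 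Third, the crux you correctly identify --- propagating decay from a subinterval to all of $[0,1]$ via subharmonicity of $\limsup_k k^{-1}\log|c_k(y)|$ and the relative extremal function --- is only gestured at, and the ``cleanest packaging'' via the cross theorem is borderline circular: the two-variable cross theorem with interval--ellipse data \emph{is} the Bernstein statement being proved, so invoking it is no more a proof than the paper's own citation of~\cite{Ahiezer:Ronkin:Bernstein}. None of this affects the paper's application, but as written the hardest step of the argument is left unproved.
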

We use this to prove:
\begin{lem}\label{lem-real-analytic}
  Every totally geodesic isometry
  $f:\mathbb{CH}^{1} \hookrightarrow \T_{g,n}$ admits a unique
  holomorphic extension in a neighborhood of the totally real diagonal
  $\delta(\mathbb{CH}^{1})\subset\mathbb{CH}^{1}\times\overline{\mathbb{CH}^{1}}$.
\end{lem}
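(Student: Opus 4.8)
The plan is to produce the holomorphic extension locally near each point $\delta(z_0)$ of the totally real diagonal by invoking Bernstein's theorem (Theorem~\ref{thm:bernstein}) in the Klein model coordinates, and then to check that the locally defined extensions are unique and hence patch to a single holomorphic extension on a neighborhood of $\delta(\mathbb{CH}^1)$. Fix $z_0\in\mathbb{CH}^1\cong\mathbb{RH}^2$ and choose a small affine square $S\cong[0,1]^2$ in the Klein disk centered at $z_0$, chosen small enough that every vertical and horizontal slice of $S$ is an affine segment lying on a complete hyperbolic geodesic, i.e.\ contained in $\mathrm{Fix}(r)$ for a (unique) reflection $r\in\mathcal{R}$. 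This is exactly where the Klein model is used: in that model geodesics are straight lines, so the coordinate slices of an affine square are automatically geodesic segments.

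Next I would supply, for each such slice $\ell$, a holomorphic map $F_\ell$ from an ellipse $E$ with foci at the endpoints of $\ell$ into $\T_{g,n}$ extending $f|_\ell$. Here I use the first leveraged fact from the proof outline: the complete real geodesic $\gamma=\mathrm{Fix}(r)$ containing $\ell$ is carried by the totally geodesic isometry $f$ to a complete real Teichm\"uller geodesic in $\T_{g,n}$ (totally geodesic isometries send real geodesics to real geodesics preserving length), and by the preliminaries on real geodesics every such Teichm\"uller geodesic extends uniquely to a holomorphic totally geodesic isometry $\gamma_\C\colon\Delta\cong\mathbb{CH}^1\hookrightarrow\T_{g,n}$, the Teichm\"uller disk generated by $(X,q)$. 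Composing with the holomorphic parametrization of $\Gamma_r\cong\mathbb{CH}^1$ given by $z\mapsto(z,r(z))$ and restricting to a suitable ellipse $E\subset\Delta$ containing $\ell$ in its interior gives the required $F_\ell$; shrinking $S$ once more, I can arrange a single ellipse $E$ (up to affine reparametrization) works simultaneously for all slices. Bernstein's theorem then yields a unique holomorphic extension of $f|_S$ to a neighborhood of $S$ in $\C^2$.

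Finally I would address uniqueness and patching. The extension produced on a neighborhood of $S$ is unique as a holomorphic extension of $f|_S$ because two holomorphic maps agreeing on the totally real submanifold $\delta(S)$ (real dimension $2$ inside complex dimension $2$) must agree — a totally real submanifold of maximal dimension is a set of uniqueness for holomorphic functions. Hence on overlaps of two such squares the two extensions agree on the overlap of their diagonals, which is again a maximal totally real set there, so they coincide; this lets the local extensions be glued to a well-defined holomorphic map $F$ on a neighborhood $U$ of $\delta(\mathbb{CH}^1)$ in $\mathbb{CH}^1\times\overline{\mathbb{CH}^1}$ with $F\circ\delta=f$. I expect the main obstacle to be the bookkeeping in the second step: verifying that the holomorphic extensions $F_\ell$ along the various slices are genuinely restrictions of holomorphic maps defined on a \emph{common} ellipse (independent of the slice, after the obvious affine normalization of each slice to $[0,1]$), and that $E$ can be taken uniformly over a whole coordinate square — this requires a compactness argument using continuity of the map $(X,q)\mapsto$ its Teichm\"uller disk and of the assignment of a reflection $r$ to a nearby geodesic, together with shrinking $S$. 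Once the hypotheses of Theorem~\ref{thm:bernstein} are arranged, the conclusion and the patching are formal.
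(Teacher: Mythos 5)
Your proposal is correct and follows essentially the same route as the paper: pass to the Klein model so that coordinate slices of a small square are geodesic segments, extend $f$ along each slice to a holomorphic map on a uniform ellipse via the Teichm\"uller disk containing that geodesic, and invoke Bernstein's theorem (Theorem~\ref{thm:bernstein}), with uniqueness coming from the diagonal being totally real. The only differences are expository: you make explicit the compactness argument for a single ellipse and the patching via uniqueness on the totally real diagonal, both of which the paper treats implicitly.
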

\begin{proof}[Proof of~\ref{lem-real-analytic}]
  Using the fact that analyticity is a local property and the
  description of geodesics in the Klein model of $\mathbb{RH}^2$, we
  can assume - without loss of generality - that the map $f$ is
  defined in a neighborhood of the unit square $[0,1]^2$ in $\R^2$ and
  has the property that its restriction on every horizontal and
  vertical line segment $\ell \cong [0,1]$ is a real-analytic
  parametrization of a Teichm\"uller geodesic segment. Moreover, we
  can also assume that the lengths of all these segments, measured in
  the Teichm\"uller metric, are uniformly bounded from above and from
  below away from zero.

  Since every segment of a Teichm\"uller geodesic extends to a
  (holomorphic) Teichm\"uller disk in $\T_{g,n}$, there exists an
  ellipse $E\subset\C$ with foci at $0$,$1$ such that the restrictions
  $f|_{\ell}$ extend to holomorphic maps
  $F_{\ell}: E \rightarrow \T_{g,n}$ for all horizontal and vertical
  line segments $\ell\cong [0,1]$ of $[0,1]^2$.  Hence, the proof of
  the lemma follows from Theorem~\ref{thm:bernstein}.
\end{proof}

\subsection*{Proof of Theorem~\ref{thm:disks}}~\\
Let $f:\mathbb{CH}^{1} \hookrightarrow \T_{g,n}$ be a totally geodesic
isometry. Applying Lemma~\ref{lem-real-analytic}, we deduce that $f$
has a \textit{unique} holomorphic extension in a neighborhood of the
totally real diagonal
$\delta(\mathbb{CH}^{1})\subset\mathbb{CH}^{1}\times\overline{\mathbb{CH}^{1}}$. We
will show that $f$ extends to a
holomorphic map from $\mathbb{CH}^{1}\times\overline{\mathbb{CH}^{1}}$ to $\T_{g,n}$.~\\
We start by defining a \textit{new} map
$F:\mathbb{CH}^{1}\times\overline{\mathbb{CH}^{1}} \rightarrow
\T_{g,n}$, satisfying:

1. $F(z,z)=f(z)$ for all $z\in\Delta \cong \mathbb{CH}^{1}$.

2. $F|_{\Gamma_r}$ is the \textit{unique} holomorphic extension of
$f|_{\text{Fix}(r)}$ for all $r\in\mathcal{R}$.
~\\
Let $r\in\mathcal{R}$ be a reflection. There is a \textit{unique}
(holomorphic) Teichm\"uller disk
$\phi_r:\mathbb{CH}^{1}\hookrightarrow\T_{g,n}$ such that the
intersection
$\phi_r(\mathbb{CH}^1)\cap f(\mathbb{CH}^{1})\subset \T_{g,n}$
contains the Teichm\"uller geodesic $f(\text{Fix}(r))$ and
$\phi_r(z)=f(z)$ for all $z\in\text{Fix}(r)$.

We define $F$ by $F(z,r(z))=\phi_r(z)$ for $z\in\mathbb{CH}^{1}$ and
$r\in\mathcal{R}$; equation~(\ref{eq:leaves}) shows that $F$ is
well-defined and satisfies conditions (1) and (2) above.

We \textit{claim} that
$F:\mathbb{CH}^{1}\times\overline{\mathbb{CH}^{1}}\rightarrow
\T_{g,n}$
is the \textit{unique} holomorphic extension of
$f: \mathbb{CH}^1 \hookrightarrow \T_{g,n}$ such that $F(z,z)=f(z)$
for $z \in\mathbb{CH}^1$.

\textit{Proof of claim}. We note that the restriction of $F$ on the
totally real diagonal $\delta(\mathbb{CH}^{1})$ agrees with $f$ and
that there is a \textit{unique} germ of holomorphic maps near
$\delta(\mathbb{CH}^{1})$ whose restriction on
$\delta(\mathbb{CH}^{1})$ coincides with $f$. Let us fix an element of
this germ $\tilde{F}$ defined on a neighborhood
$U\subset\mathbb{CH}^{1}\times\overline{\mathbb{CH}^{1}}$ of
$\delta(\mathbb{CH}^{1})$. For every $r\in\mathcal{R}$, the
restrictions of $F$ and $\tilde{F}$ on the intersection
$U_r= U \cap \Gamma_r$ are holomorphic and equal along the
real-analytic arc $U_r \cap \delta(\mathbb{CH}^{1}) \subset U_r$;
hence they are equal on $U_r$. Since
$\mathbb{CH}^{1}\times\overline{\mathbb{CH}^{1}} =
\bigcup_{r\in\mathcal{R}}\Gamma_r$,
we conclude that $F|_{U}=\tilde{F}$ and, in particular, $F$ is
holomorphic near the totally real diagonal $\delta(\mathbb{CH}^{1})$.
Since, in addition to that, $F$ is holomorphic along all the leaves
$\Gamma_r$ of the foliation, we deduce~\footnote{For a simple proof of
  this claim using the power series expansion of $F$ at
  $(0,0)\in\mathbb{CH}^{1}\times\overline{\mathbb{CH}^{1}}$,
  see~\cite[Lemma~2.2.11]{Hormander:book}.} that it is holomorphic at
all points of $\mathbb{CH}^{1}\times\overline{\mathbb{CH}^{1}}$.\qed

In order to finish the proof of the theorem, we use the \textit{key}
observation~(\ref{eq:flip}); which we recall as follows: the points
$(z,w)$ and $(w,z)$ are always contained in the same leaf $\Gamma_r$
of the foliation for all $z,w\in\Delta\cong\mathbb{CH}^{1}$. Using the
fact that the restriction of $F$ on every leaf $\Gamma_{r}$ is a
Teichm\"uller disk, we conclude that
$d_{\T_{g,n}}(F(z,w),F(w,z))=d_{\mathbb{CH}^{1}}(z,w)$.
  
Let $\theta \in \mathbb{R}/2\pi\mathbb{Z}$, it follows that at least
one of $\displaystyle F(\rho e^{i\theta},0)$ and
$\displaystyle F(0,\rho e^{i\theta})$ diverges in Teichm\"uller space
as $\rho \rightarrow 1$. In particular, there is a subset
$I\subset \mathbb{R}/2\pi\mathbb{Z}$ with positive measure such that
either $F(\rho e^{i\theta},0)$ or
$\displaystyle F(0,\rho e^{i\theta})$ diverges as $\rho \rightarrow 1$
for all $\theta \in I$.

We assume first that the former of the two is true. Using that
$F: \mathbb{CH}^1\times\overline{\mathbb{CH}^1} \rightarrow \T_{g,n}$
is holomorphic, we deduce from~\cite{Tanigawa:holomap} (Sullivan's
rigidity theorem) that the family $\{F(z,\overline{w})\}_{w\in\Delta}$
of holomorphic maps
$ F(\cdot,\overline{w}): \Delta\cong\mathbb{CH}^1 \rightarrow
\T_{g,n}$
for $w\in \Delta\cong\mathbb{CH}^1$ is \textit{trivial}. Therefore,
$F(z,0)=F(z,z)=f(z)$ for all $z\in \Delta$ and, in particular, $f$ is
holomorphic. If we assume that the latter of the two is true we
similarly deduce that $F(0,z)=F(z,z)=f(z)$ for all $z\in \Delta$ and,
in particular, $f$ is anti-holomorphic.\qed





\section{Extremal length geometry}\label{sec:balls}

In this section we prove:

\begin{thm}\label{thm:balls}
  There is no holomorphic isometry $f: \mathbb{CH}^2 \hookrightarrow
  \T_{g,n}$ for the Kobayashi metric.
\end{thm}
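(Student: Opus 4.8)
The plan is to argue by contradiction: assuming a holomorphic isometry $f:\mathbb{CH}^2\hookrightarrow\T_{g,n}$ exists, I will show that it would impose a real-analytic, Hermitian (``round'') structure on the extremal length geometry of $\T_{g,n}$ which is incompatible with the fact that the Teichm\"uller--Kobayashi metric, viewed as a real Finsler metric, is genuinely non-smooth. First I record two structural consequences. Since every real geodesic of $\mathbb{CH}^2$ is contained in a complex geodesic, and $f$ carries each complex geodesic of $\mathbb{CH}^2$ onto a complex geodesic of $\T_{g,n}$, that is, onto a Teichm\"uller disk (by the description of complex geodesics of $\T_{g,n}$ recalled above), the map $f$ is totally geodesic and, fixing a base point $0\in\mathbb{CH}^2$ with $X_0=f(0)$, it restricts to a real-analytic $\mathbb{P}^1$-family of Teichm\"uller disks through $X_0$, one in each complex direction. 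Dually, $df_0$ is a complex-linear isometry of $(\C^2,\text{Hermitian norm})$ onto a subspace of $(T_{X_0}\T_{g,n},\|\cdot\|_{\T})$, so by Hahn--Banach it exhibits $(\C^2,\text{Hermitian norm})$ as a metric quotient of $(Q(X_0),\|\cdot\|_1)$, and this quotient must vary real-analytically as the base point moves along $f(\mathbb{CH}^2)$.

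Next I set up \emph{extremal length coordinates}. For $\mathcal{F}\in\mathcal{MF}_{g,n}$ put $\ell_{\mathcal F}=\tfrac12\log\lambda(\mathcal F,\cdot)$ on $\T_{g,n}$. By Kerckhoff's formula $d_{\T_{g,n}}(X,Y)=\sup_{\mathcal F}\bigl(\ell_{\mathcal F}(Y)-\ell_{\mathcal F}(X)\bigr)$, each $\ell_{\mathcal F}$ is $1$-Lipschitz and the family $\{\ell_{\mathcal F}\}$ recovers the Teichm\"uller metric. By the Hubbard--Masur theorem~\ref{thm:hubbard:masur} together with Gardiner's variational formula, $\ell_{\mathcal F}$ is $C^1$ near $X_0$ with differential proportional to the Hubbard--Masur differential $q_{\mathcal F}(X_0)\in Q(X_0)=T^*_{X_0}\T_{g,n}$, and $[q_{\mathcal F}(X_0)]$ runs over all of $\mathbb P(Q(X_0))$ as $\mathcal F$ varies; moreover the restriction of $\ell_{\mathcal F}$ to any Teichm\"uller disk is real-analytic, with $2$-jet expressible through the intersection numbers $i(\mathcal F,\mathcal F(\pm q))$, i.e. through the embedding $Q\T_{g,n}\hookrightarrow\mathcal{MF}_{g,n}\times\mathcal{MF}_{g,n}$. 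Pulling back by $f$, the functions $g_{\mathcal F}=\ell_{\mathcal F}\circ f$ on $\mathbb{CH}^2$ are real-analytic, $1$-Lipschitz, and satisfy $d_{\mathbb{CH}^2}(p,p')=\sup_{\mathcal F}\bigl(g_{\mathcal F}(p')-g_{\mathcal F}(p)\bigr)$.

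Finally I compare second-order data at $0$. Matching the $2$-jets of $\{g_{\mathcal F}\}$ against those of the complex hyperbolic metric --- which is real-analytic of constant holomorphic curvature $-4$, and for which the ball, unlike a product of disks, does not split, so that the mixed second derivatives along two distinct complex geodesics through $0$ are rigidly prescribed --- forces an over-determined system of identities among the intersection numbers $i(\mathcal F,\mathcal F(q))$ as $q$ ranges over the $2$-complex-dimensional space $df_0(\C^2)$ of generating differentials. Equivalently, the $L^1$-ball of $Q(X_0)$ would have to admit a $2$-complex-dimensional Hilbertian metric quotient extending real-analytically over nearby surfaces; I would derive a contradiction from the extremal length structure, using that the quadratic differentials generating Teichm\"uller disks of $\T_{g,n}$ all have zeros, at which the extremal length functional $q\mapsto\lambda(\mathcal F(q),X)=\int_X|q|$ fails to be $C^3$ --- equivalently, the second-order asymptotics of extremal length along a $2$-parameter family of Teichm\"uller rays from $X_0$ are genuinely $\ell^1$-like rather than round --- so that no arrangement of the $\mathbb P^1$ of Teichm\"uller disks glues into an isometric $\mathbb{CH}^2$. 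I expect this last step to be the main obstacle: since the Teichm\"uller--Kobayashi metric is a genuine Finsler metric that is only $C^2$, no Riemannian curvature comparison is available, and the work lies in replacing curvature by precise extremal-length (intersection-number) estimates and in checking that the resulting constraints over the $\mathbb P^1$ of disks are inconsistent.
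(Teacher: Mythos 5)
Your opening reduction is fine and matches the paper's first step: since every real geodesic of $\mathbb{CH}^2$ lies in a complex geodesic and each complex geodesic is carried by $f$ to a Teichm\"uller disk, the map is totally geodesic. But from there the argument is a programme, not a proof, and the gap sits exactly where the contradiction is supposed to appear. Your plan is to extract a contradiction from second-order (jet) data of the extremal-length functions at the basepoint, i.e.\ essentially from the infinitesimal norm structure and its variation. This cannot work as stated: whether $(T_X\T_{g,n},\|\cdot\|_{\T})$ contains a \emph{round} two-dimensional complex linear slice is precisely Question~2 at the end of the paper and is open; Theorem~\ref{thm:slice} rules out round slices only in the \emph{cotangent} space $(Q(X),\|\cdot\|_1)$, which by your own Hahn--Banach duality remark addresses quotients (submersions), not embeddings. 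So no contradiction is available from the $1$-jet of the metric at a point, and since the Teichm\"uller metric is Finsler there is no curvature tensor against which to ``match $2$-jets with constant holomorphic curvature $-4$''; you acknowledge this, and the substitute you propose (``genuinely $\ell^1$-like rather than round'' second-order asymptotics, non-$C^3$ behaviour of $q\mapsto\|q\|_1$, real-analyticity of $\lambda(\mathcal{F},\cdot)$ along Teichm\"uller disks and of $g_{\mathcal{F}}$ on $\mathbb{CH}^2$) is a collection of unproved and partly doubtful assertions --- indeed the paper's Proposition~\ref{prop:intersection} shows the relevant quantities vary like $\sum_i|\mathrm{Re}(e^{i\theta/2}v_i)|$, which is only piecewise analytic, undercutting the analyticity you assume. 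The step you flag as ``the main obstacle'' is the theorem.

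For comparison, the paper's proof is global rather than infinitesimal: after total geodesy, it chooses the two orthogonal rays in a totally real $\mathbb{RH}^2$ so that their vertical foliations $\mathcal{F}(q),\mathcal{F}(p)$ are uniquely ergodic, and uses Masur's Theorem~\ref{thm:masur:ue} (an asymptotic statement as $t\to\infty$, not a jet computation) to show $f$ carries complex hyperbolic horocycles into extremal-length horocycles. The horocycle picture converts the hyperbolic distance between the two horocycle crossings of the connecting geodesic into the intersection number $i(\mathcal{F}(q),\mathcal{F}(p))=\tfrac12$; rotating by the automorphism $(z,w)\mapsto(e^{-i\theta}z,w)$ then forces $\theta\mapsto i(\mathcal{F}(e^{i\theta}q),\mathcal{G})$ to be constant, which contradicts the explicit holonomy-vector formula of Proposition~\ref{prop:intersection}. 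If you want to salvage your approach, you would need to replace the jet comparison by some such global mechanism; as written, the proposal does not establish the theorem.
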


The proof of the theorem uses the idea of \textit{realification} and
leverages the fact that extremal length provides a link between the
geometry of Teichm\"uller geodesics and the geometric intersection
pairing for measured foliations.

\subsection*{Outline of the proof}
Using a theorem of
Slodkowski~\cite{Slodkowski:motions},~\cite{Earle:Kra:Krushkal}, we
deduce that such an isometry would be totally-geodesic - it would send
real geodesics in $\mathbb{CH}^2$ to Teichm\"uller geodesics in
$\T_{g,n}$ preserving their length. We can parametrize the set of
Teichm\"uller geodesic rays from any base point $X\in\T_{g,n}$, using
Theorem~\ref{thm:hubbard:masur}, by the subspace of measured
foliations $\mathcal{F}\in\mathcal{MF}_{g,n}$ with extremal length
$\lambda(\mathcal{F},X)=1$.

Assuming the existence of $f$, we consider pairs of measured
foliations that parametrize orthogonal geodesic rays in the image of a
\textit{totally real} geodesic hyperbolic plane
$\mathbb{RH}^2\subset\mathbb{CH}^2$.  We obtain a contradiction by
computing their geometric intersection number in two different ways.
~\\
\large
\[
\xymatrix{
\mathbb{CH}^2 \ar@{^{(}->}^f[r]
&\T_{g,n}\\
\mathbb{RH}^2  \ar@{^{(}->}[u] \ar@{^{(}->}[ur]}
\]
~\\
\normalsize

On the one hand, we use the geometry of complex hyperbolic horocycles
and extremal length to show that the geometric intersection number
does not depend on the choice of the totally real geodesic plane. On
the other hand, by a direct geometric argument we show that this is
impossible. More precisely, we have:
\begin{prop}\label{prop:intersection}
  Let $q\in Q_1\T_{g,n}$ and $\mathcal{G}\in \mathcal{MF}_{g,n}$.
  There exist $v_1, \ldots, v_N \in \C^{*}$ such that
  $i(\mathcal{F}(e^{i\theta}q),\mathcal{G})=\sum_{i=1}^{N} |
  \text{Re}(e^{i\theta/2}v_i)|$ for all $\theta \in
  \mathbb{R}/2\pi\mathbb{Z}$.
\end{prop}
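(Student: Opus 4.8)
The plan is to analyze the geometry of the measured foliation $\mathcal{F}(e^{i\theta}q)$ as $\theta$ varies and to exploit the fact that multiplying a quadratic differential by $e^{i\theta}$ rotates its horizontal/vertical directions by a \emph{half-angle} $\theta/2$ in the flat structure $|q|$. Concretely, away from the finitely many zeros of $q$ we have a natural flat structure with local coordinate $z$ in which $q = dz^2$, and the vertical foliation $\mathcal{F}(e^{i\theta}q)$ is tangent to vectors $v$ with $e^{i\theta}(v(z))^2 < 0$, i.e.\ lines in direction $e^{-i\theta/2}$ (up to sign). The transverse measure of $\mathcal{F}(e^{i\theta}q)$ is, locally, $|\operatorname{Re}(e^{i\theta/2}\sqrt{q})| = |\operatorname{Re}(e^{i\theta/2}dz)|$. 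So the whole $1$-parameter family is obtained by a rotating family of "$|\operatorname{Re}|$ of a $1$-form" structures on the same flat surface.

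First I would fix the other foliation $\mathcal{G}$ and realize its geometric intersection number with $\mathcal{F}(e^{i\theta}q)$ via the flat structure: $i(\mathcal{F}(e^{i\theta}q),\mathcal{G})$ is computed by integrating the transverse measure of $\mathcal{F}(e^{i\theta}q)$ along the leaves of (a representative of) $\mathcal{G}$, or — more symmetrically — by taking a representative of $\mathcal{G}$ consisting of geodesic segments for the flat metric $|q|$ and summing up the total $|\operatorname{Re}(e^{i\theta/2}\cdot)|$-variation along them. The key structural point is that $\mathcal{G}$, being quasi-transverse to the flat structure of $q$, can be represented by finitely many $|q|$-geodesic arcs (concatenations of saddle connections), each of which has a well-defined holonomy vector in $\C$: if an arc runs in straight direction with $|q|$-development $w_j \in \C$, its contribution to the transverse measure of $\mathcal{F}(e^{i\theta}q)$ is exactly $|\operatorname{Re}(e^{i\theta/2}w_j)|$. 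Summing over the finitely many maximal straight pieces of a geodesic representative of $\mathcal{G}$ gives complex numbers $v_1,\dots,v_N \in \C^*$ with
\[
 i(\mathcal{F}(e^{i\theta}q),\mathcal{G}) \;=\; \sum_{i=1}^N \bigl|\operatorname{Re}(e^{i\theta/2}v_i)\bigr|
\]
for all $\theta$, which is precisely the claim.

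The main obstacle is making the representative of $\mathcal{G}$ legitimately a \emph{finite} union of straight $|q|$-geodesic arcs and ensuring the formula is valid \emph{simultaneously for all} $\theta$, not just for a fixed one. The subtlety: as $\theta$ varies, the geodesic representative of $\mathcal{G}$ for the flat metric $|q|$ does not change (the metric $|q|$ is fixed — only the transverse-measure $1$-form rotates), so one fixes once and for all a $|q|$-geodesic representative $\gamma_{\mathcal{G}}$ of $\mathcal{G}$ minimizing $i(\cdot,\mathcal{F}(q))$, decompose it into its finitely many saddle connections with holonomies $v_i$, and then check that $\sum_i |\operatorname{Re}(e^{i\theta/2}v_i)|$ equals $i(\mathcal{F}(e^{i\theta}q),\mathcal{G})$ for every $\theta$. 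Equality for a full-measure (indeed open dense) set of $\theta$ — those for which $\gamma_{\mathcal{G}}$ remains quasi-transverse to $\mathcal{F}(e^{i\theta}q)$, so that $i(\mathcal{F}(e^{i\theta}q),\mathcal{G})$ is literally computed by integrating along $\gamma_{\mathcal{G}}$ — is the geometric heart; the remaining finitely many bad $\theta$ (where some $v_i$ becomes vertical, $\operatorname{Re}(e^{i\theta/2}v_i)=0$) follow by continuity, since both sides of the identity are continuous functions of $\theta$ (continuity of $i(\cdot,\cdot)$ is in the excerpt, and the right side is manifestly continuous). I would also remark that the phase ambiguity $v_i \leftrightarrow -v_i$ and the half-angle are harmless: $|\operatorname{Re}(e^{i\theta/2}(-v_i))| = |\operatorname{Re}(e^{i\theta/2}v_i)|$, and the $\theta \mapsto \theta + 2\pi$ periodicity of the left side matches $e^{i(\theta+2\pi)/2} = -e^{i\theta/2}$ on the right, so no consistency problem arises. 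A clean way to package the saddle-connection bookkeeping is to pass to the orienting double cover of the flat surface on which $\sqrt{q}$ is single-valued, where $\mathcal{G}$ pulls back to an integral of $|\operatorname{Re}(e^{i\theta/2}\omega)|$ for the abelian differential $\omega$ with $\omega^2 = q$, turning the whole statement into the additivity of period integrals along a fixed cycle.
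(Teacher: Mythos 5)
Your computation in the core case is the same as the paper's: represent $\mathcal{G}$ by a $|q|$-geodesic made of finitely many saddle connections with holonomy vectors $v_i$, and observe that the transverse measure of $\mathcal{F}(e^{i\theta}q)$ integrates along each straight piece to $|\operatorname{Re}(e^{i\theta/2}v_i)|$, uniformly in $\theta$. But there is a genuine gap in how you pass to arbitrary $\mathcal{G}\in\mathcal{MF}_{g,n}$: you assert that any measured foliation, ``being quasi-transverse to the flat structure of $q$, can be represented by finitely many $|q|$-geodesic arcs (concatenations of saddle connections).'' That is false in general. A measured foliation with a minimal (arational) component has dense leaves and admits no representative supported on finitely many saddle connections; likewise the ``fixed cycle'' picture on the orienting double cover only makes sense for (weighted multi)curves, not for a general element of $\mathcal{MF}_{g,n}$, whose intersection with $\mathcal{F}(e^{i\theta}q)$ is a priori an integral over the foliation, not a finite sum. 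So your argument as written proves the proposition only when $\mathcal{G}$ is a simple closed curve (or weighted multicurve).

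The missing step is the approximation argument the paper uses: first prove the formula for simple closed curves; then note that the number $N$ of saddle connections in a geodesic representative is bounded above by a constant depending only on the topology of $\Sigma_{g,n}$; finally, since weighted simple closed curves are dense in $\mathcal{MF}_{g,n}$ and $i(\cdot,\cdot)$ is continuous, approximate $\mathcal{G}$ by curves $c_k$, extract a convergent subsequence of the (uniformly bounded, uniformly many) holonomy vectors $v_i^{(k)}$, and pass to the limit in the identity, discarding any vectors that tend to $0$. The uniform bound on $N$ is exactly what makes the class of functions $\theta\mapsto\sum_{i=1}^{N}|\operatorname{Re}(e^{i\theta/2}v_i)|$ closed under such limits; without it the finite-sum form would not survive. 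A second, more minor, omission: when $q$ has simple poles at punctures a closed curve need not have a geodesic representative at all, and one must run the computation on a sequence of length-minimizing representatives (as the paper does) rather than on an actual geodesic. Your continuity argument in $\theta$ for the finitely many ``vertical'' directions is fine, but it does not address either of these issues.
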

The proof of the proposition is given at the end of the section.\qed
~\\

We start with preliminaries on compex hyperbolic and extremal length
horocycles.

\subsection*{Complex hyperbolic horocycles}
Let $\gamma: [0,\infty) \rightarrow \mathbb{CH}^2$ be a geodesic ray
with unit speed. Since $\mathbb{CH}^2$ is a homogeneous space, we have
$\gamma=\alpha \circ \gamma_1$, where $\gamma_1(t) = (\tanh(t),0)$,
for $t\geq 0$, and $\alpha$ is a holomorphic isometry of
$\mathbb{CH}^2$. Each geodesic ray is contained in the image of unique
holomorphic totally-geodesic isometry
$\gamma: \mathbb{CH}^1 \hookrightarrow \mathbb{CH}^2$ satisfying
$\gamma(t)=\phi(\tanh(t))$; in particular, $\phi_1 (z) = (z,0)$, for
$z\in \Delta\cong \mathbb{CH}^1$. We note that every complex geodesic
$\phi: \mathbb{CH}^1 \hookrightarrow \mathbb{CH}^2$ arises uniquely
(up to pre-composition with an automorphism of $\mathbb{CH}^1$) as the
intersection of the unit ball in $\mathbb{C}^2$ with a complex affine
line.

Associated to each geodesic ray
$\gamma: [0,\infty) \rightarrow \mathbb{CH}^2$ is a pair of transverse
foliations of $\mathbb{CH}^2$, one by real geodesics asymptotic to
$\gamma$ and another by complex hyperbolic horocycles asymptotic to
$\gamma$. For each $p\in \mathbb{CH}^2$ there exists a \textit{unique}
geodesic $\gamma_p: \R \rightarrow \mathbb{CH}^2$ and a
\textit{unique} time $t_p\in \R$ such that $\gamma_p(t_p)=p$ and
$\displaystyle \lim_{t\rightarrow
  \infty}d_{\mathbb{CH}^2}(\gamma(t),\gamma_p(t))\rightarrow 0$.
For each $s\in \R_{+}$, we define the set
$H(\gamma,s)= \{~~ p \in \mathbb{CH}^2 ~~|~~ \exp(t_p)=s ~~\}$. The
collection of subsets $\{H(\gamma,s)\}_{s\in \mathbb{R}_{+}}$ defines
the foliation of $\mathbb{CH}^2$ by \textit{complex hyperbolic
horocycles} asymptotic to $\gamma$.

\subsection*{Extremal length horocycles}
Let $\gamma: [0,\infty) \rightarrow \T_{g,n}$ be a Teichm\"uller
geodesic ray with unit speed. It has a unique lift to
$\widetilde{\gamma}(t)=(X_t,q_t) \in Q_1\T_{g,n}$, such that
$\gamma(t)=X_t$ and $\widetilde{\gamma}(t) = \text{diag}(e^{t}, e^{-t})\cdot (X_0,q_0)$.
The map $q \mapsto (\mathcal{F}(q),\mathcal{F}(-q))$ gives an
embedding
$Q\T_{g,n} \hookrightarrow \mathcal{MF}_{g,n} \times
\mathcal{MF}_{g,n}$
which satisfies $||q||_1=i(\mathcal{F}(q),\mathcal{F}(-q))$ and sends
the lift $\widetilde{\gamma}(t)=(X_t,q_t)$ of Teichm\"uller geodesic
ray $\gamma$ to a path of the form
$(e^t\mathcal{F}(q),e^{-t}\mathcal{F}(-q))$.

The later description of a Teichm\"uller geodesic and
Theorem~\ref{thm:hubbard:masur} show that the extremal length of
$\mathcal{F}(q_t)$ along $\gamma$ satisfies
$\lambda(\mathcal{F}(q_t),X_s)=e^{2(t-s)}$ for all $t,s\in \R_{+}$,
which motivates the following definition. For each
$\mathcal{F} \in \mathcal{MF}_{g,n}$ the \textit{extremal length
  horocycles} asymptotic to $\mathcal{F}$ are the level-sets of
extremal length
$H(\mathcal{F},s) = \{~~ X \in \T_{g,n} ~~|~~ \lambda(\mathcal{F},X)=s
~~\}$
for $s\in \R_{+}$. The collection of subsets
$\{H(\mathcal{F},s)\}_{s\in \mathbb{R}_{+}}$ defines the foliation of
$\T_{g,n}$ by \textit{extremal length horocycles} asymptotic to
$\mathcal{F}$.

There is transverse foliation of $\T_{g,n}$ by real Teichm\"uller
geodesics with lifts $(X_t, q_t)$ that satisfy
$\mathcal{F}(q_{t}) \in \R_{+}\cdot \mathcal{F}$. One might expect
that this foliation of $\T_{g,n}$ is analogous to the foliation of
$\mathbb{CH}^2$ by geodesics that are positively asymptotic to
$\gamma$. Although this is not always true, it is true for
\textit{generic} measured foliations
$\mathcal{F} \in \mathcal{MF}_{g,n}$.
\begin{thm}\label{thm:masur:ue}(\cite{Masur:ergodic:geodesics};~H.~Masur)
  Let $(X_t,q_t)$ and $(Y_t,p_t)$ be two Teichm\"uller geodesics and
  $\mathcal{F}(q_0)\in\mathcal{MF}_{g,n}$ be uniquely
  ergodic.~\footnote{A measured foliation $\mathcal{F}$ is
    \textit{uniquely ergodic} if it is minimal and admits a unique, up
    to scaling, transverse measure; in particular,
    $i(\gamma,\mathcal{F}) > 0 $ for all simple closed curves
    $\gamma$. Compare with ~\cite{Masur:ergodic:geodesics}.} Then
  $lim_{t\rightarrow\infty}d_{\T_{g,n}}(X_t,Y_t)\rightarrow 0$ if and
  only if $\mathcal{F}(q_0)=\mathcal{F}(p_0)$ in $\mathcal{MF}_{g,n}$
  and $\lambda(\mathcal{F}(q_0),X_0)=\lambda(\mathcal{F}(p_0),Y_0)$.
\end{thm}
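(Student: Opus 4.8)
The plan is to deduce the statement from Kerckhoff's formula for the Teichm\"uller metric,
\[
 d_{\T_{g,n}}(X,Y)=\tfrac{1}{2}\,\log\,\sup_{\mathcal{H}\in\mathcal{MF}_{g,n}}\frac{\lambda(\mathcal{H},X)}{\lambda(\mathcal{H},Y)},
\]
together with a sharp description of how extremal length grows along a Teichm\"uller geodesic whose vertical foliation is uniquely ergodic. Normalising so that the lifts lie in $Q_1\T_{g,n}$, i.e. $||q_t||_1=||p_t||_1=1$, the second hypothesis $\lambda(\mathcal{F}(q_0),X_0)=\lambda(\mathcal{F}(p_0),Y_0)$ just records $1=1$ via Hubbard--Masur (Theorem~\ref{thm:hubbard:masur}), so the real content is the equality of the vertical \emph{measured} foliations. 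The key lemma I would isolate is: if $\mathcal{F}(q_0)$ is uniquely ergodic then, for every $\mathcal{H}\in\mathcal{MF}_{g,n}$,
\[
 \lim_{t\to\infty}e^{-2t}\,\lambda(\mathcal{H},X_t)=i(\mathcal{H},\mathcal{F}(q_0))^2 .
\]
Here I use freely that under $q\mapsto(\mathcal{F}(q),\mathcal{F}(-q))$ the geodesic has vertical foliation $\mathcal{F}(q_t)=e^{t}\mathcal{F}(q_0)$, and that $i(\cdot,\cdot)$ and $\lambda(\cdot,X)$ are homogeneous of degrees one and two in the transverse measure.

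For the lemma, the lower bound is unconditional: the Gardiner--Masur inequality $\lambda(\mathcal{H},X)\,\lambda(\mathcal{G},X)\geq i(\mathcal{H},\mathcal{G})^2$, applied with $\mathcal{G}=\mathcal{F}(q_t)$ and combined with $\lambda(\mathcal{F}(q_t),X_t)=||q_t||_1=1$ from Hubbard--Masur, gives $e^{-2t}\lambda(\mathcal{H},X_t)\geq i(\mathcal{H},\mathcal{F}(q_0))^2$ for every $t$. For the matching upper bound I would test the extremal-length definition against the flat metric $|q_t|$, so that $\lambda(\mathcal{H},X_t)\leq \ell_{|q_t|}(\mathcal{H})^2$, and reduce the lemma to showing $e^{-t}\ell_{|q_t|}(\mathcal{H})\to i(\mathcal{H},\mathcal{F}(q_0))$. \textbf{This is the main obstacle, and the one place where unique ergodicity is indispensable}: it forces the $|q_t|$-geodesic representative of $\mathcal{H}$ to realise, up to lower order, its horizontal variation $i(\mathcal{H},\mathcal{F}(q_t))=e^{t}i(\mathcal{H},\mathcal{F}(q_0))$, while the vertical variation --- contracted by $e^{-t}$ --- becomes negligible. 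Without unique ergodicity the minimisation of horizontal variation can cost unbounded vertical length, or hide on a proper invariant sublocus, and the limit genuinely fails.

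Granting the lemma, the forward implication is short. If $d_{\T_{g,n}}(X_t,Y_t)\to 0$ then Kerckhoff's formula forces $\lambda(\mathcal{H},Y_t)/\lambda(\mathcal{H},X_t)\to 1$ for every $\mathcal{H}$, so by the lemma $e^{-2t}\lambda(\mathcal{H},Y_t)\to i(\mathcal{H},\mathcal{F}(q_0))^2$. Comparing with the unconditional lower bound for $Y_t$ in terms of its own vertical foliation gives $i(\mathcal{H},\mathcal{F}(q_0))\geq i(\mathcal{H},\mathcal{F}(p_0))$ for all $\mathcal{H}$; taking $\mathcal{H}=\mathcal{F}(q_0)$ and using $i(\mathcal{F}(q_0),\mathcal{F}(q_0))=0$ yields $i(\mathcal{F}(q_0),\mathcal{F}(p_0))=0$. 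Since $\mathcal{F}(q_0)$ is filling, $\mathcal{F}(p_0)=c\,\mathcal{F}(q_0)$ for some $c>0$; but then $\mathcal{F}(p_0)$ has the same uniquely ergodic support, so the lemma applies to $Y_t$ as well, and equating $c^{2}i(\mathcal{H},\mathcal{F}(q_0))^2$ with $i(\mathcal{H},\mathcal{F}(q_0))^2$ forces $c=1$. Hence $\mathcal{F}(q_0)=\mathcal{F}(p_0)$, which together with the normalisation is exactly the asserted pair of conditions.

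For the converse, assume $\mathcal{F}(q_0)=\mathcal{F}(p_0)=:\mathcal{F}$ with matching normalisation. For each $\mathcal{H}$ with $i(\mathcal{H},\mathcal{F})>0$ the lemma gives $\lambda(\mathcal{H},X_t)/\lambda(\mathcal{H},Y_t)\to 1$, and I would pass this through Kerckhoff's supremum over the compact sphere $\{\mathcal{H}:\lambda(\mathcal{H},X_0)=1\}$. The delicate point --- the second, more hidden, use of unique ergodicity --- is uniformity near the single direction $\mathcal{H}=\mathcal{F}$, which (because $\mathcal{F}$ is filling) is the only zero of $i(\cdot,\mathcal{F})$ and where the leading term of the lemma degenerates. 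There I would exploit that $\lambda(\mathcal{F},X_t)=e^{-2t}=\lambda(\mathcal{F},Y_t)$ \emph{exactly}, together with a $t$-uniform comparison of extremal lengths in a neighbourhood of $\mathcal{F}$, to upgrade the pointwise limits to $\sup_{\mathcal{H}}\lambda(\mathcal{H},X_t)/\lambda(\mathcal{H},Y_t)\to 1$, whence $d_{\T_{g,n}}(X_t,Y_t)\to 0$. A more hands-on alternative for this direction is to build quasiconformal maps $X_t\to Y_t$ directly: the two flat structures share the vertical foliation and the area, while the horizontal data, along which $X_0$ and $Y_0$ differ, is crushed by the factor $e^{-t}$, so a natural marked identification has dilatation tending to $1$.
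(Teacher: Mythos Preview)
The paper does not prove Theorem~\ref{thm:masur:ue}: it is quoted, with attribution, from Masur's paper~\cite{Masur:ergodic:geodesics} and used as a black box in Step~1 of the proof of Theorem~\ref{thm:balls}. There is therefore no ``paper's own proof'' to compare your proposal against.

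That said, your outline is a reasonable sketch of how one proves this result, and is close in spirit to the modern treatment via extremal length asymptotics. A few comments. Your key lemma $e^{-2t}\lambda(\mathcal{H},X_t)\to i(\mathcal{H},\mathcal{F}(q_0))^2$ is correct and is indeed the crux; the lower bound via Minsky's inequality is clean, and you are right that the matching upper bound $e^{-t}\ell_{|q_t|}(\mathcal{H})\to i(\mathcal{H},\mathcal{F}(q_0))$ is exactly where unique ergodicity enters and requires real work (this is essentially the content of Masur's argument). In the forward direction your deduction that $i(\mathcal{F}(q_0),\mathcal{F}(p_0))=0$ and hence $\mathcal{F}(p_0)\in\R_{+}\cdot\mathcal{F}(q_0)$ is fine. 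For the converse, the uniformity of Kerckhoff's supremum near the degenerate direction $\mathcal{H}=\mathcal{F}$ is genuinely delicate and is not something you can wave through; your alternative of exhibiting explicit quasiconformal maps $X_t\to Y_t$ with dilatation tending to $1$ is closer to how Masur actually argues and is the more robust route. Finally, your remark that under the $Q_1\T_{g,n}$ normalisation the condition $\lambda(\mathcal{F}(q_0),X_0)=\lambda(\mathcal{F}(p_0),Y_0)$ is automatic is correct; in the general statement it serves to synchronise the time parametrisations of the two rays.
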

\begin{remark}
  It is known that this result is not true for measured foliations
  that are not uniquely ergodic.
\end{remark}
\subsection*{Proof of Theorem~\ref{thm:balls}}
Let $f: \mathbb{CH}^2 \hookrightarrow \T_{g,n}$ be a holomorphic
isometry for the Kobayashi metric. We summarize the proof in the
following three steps:~\\

\noindent\textbf{1.} \textit{Asymptotic behavior of geodesics
  determines the extremal length horocycles.}

\noindent\textbf{2.} \textit{The geometry of horocycles determines the
  geometric intersection pairing.}

\noindent\textbf{3.} \textit{Get a contradiction by a direct
  computation of the geometric intersection pairing.}~\\

\noindent\textbf{Step 1.} Let $X =f((0,0)) \in \T_{g,n}$ and $q,p \in Q_1(X)$
unit area quadratic differentials generating the two Teichm\"uller
geodesic rays $f(\gamma_1)$,$f(\gamma_2)$, where $\gamma_1$,$\gamma_2$
are two orthogonal geodesic rays in $\mathbb{CH}^2$ contained in the
image of the totally real geodesic hyperbolic plane
$\mathbb{RH}^2\subset\mathbb{CH}^2$; explicitly, they are given by the
formulas $\gamma_1(t) = (\tanh(t),0)$, $\gamma_2(t)=(0,\tanh(t))$, for
$t\geq0$.

For every $(X,q) \in Q_1\T_{g,n}$ there is a dense set of
$\theta \in \mathbb{R}/2\pi\mathbb{Z}$ such that the measured
foliation $\mathcal{F}(e^{i\theta}q)$ is uniquely
ergodic~\cite{Chaika:Cheung:Masur:winning}; hence, we can assume
without loss of generality (up to a holomorphic automorphism of
$\mathbb{CH}^2$) that both $\mathcal{F}(q)$ and $\mathcal{F}(p)$ are
(minimal) uniquely ergodic measured foliations. In particular, we can
apply Theorem~\ref{thm:masur:ue} to study the extremal length
horocycles asymptotic to $\mathcal{F}(q)$ and $\mathcal{F}(p)$
respectively.

The complex hyperbolic horocycle $H(\gamma_1,1)$ is characterized by
the property that for the points $P \in H(\gamma_1,1)$ the geodesic
distance between $\gamma_P(t)$ and $\gamma_1(t)$ tends to zero as
$t \rightarrow +\infty$, where $\gamma_P(t)$ is the unique geodesic
with unit speed through $P$ that is positively asymptotic to
$\gamma_1$. Applying Theorem~\ref{thm:masur:ue} we conclude that:
\begin{equation}\label{eq:1}
f(\mathbb{CH}^2) \cap H(\mathcal{F}(q),1) = f( H(\gamma_1,1))
\end{equation}
\begin{equation}\label{eq:2}
f(\mathbb{CH}^2) \cap H(\mathcal{F}(p),1) = f( H(\gamma_2,1))  
\end{equation}

\noindent\textbf{Step 2.}  Let $\delta$ be the (unique) complete real 
geodesic in $\mathbb{CH}^2$, which is asymptotic to $\gamma_1$ in the
positive direction and to $\gamma_2$ in the negative direction,
i.e. its two endpoints are $(1,0),(0,1) \in \C^2$ in the boundary of
the unit ball. Let $P_1$ and $P_2$ be the two points where $\delta$
intersects the horocycles $H(\gamma_1,1)$ and $H(\gamma_2,1)$,
respectively. See~\ref{fig1}.

The image of $\delta$ under the map $f$ is a Teichm\"uller geodesic
which is parametrized by a pair of measured foliations
$\mathcal{F},\mathcal{G} \in \mathcal{MF}_{g,n}$ with
$i(\mathcal{F},\mathcal{G})=1$ and its unique lift to $Q_1\T_{g,n}$ is
given by $(e^t \mathcal{F},e^{-t}\mathcal{G})$, for $t\in \R$. Let
$\widetilde{P_i} = (e^{t_i} \mathcal{F},e^{-t_i}\mathcal{G})$, for
$i=1,2$, denote the lifts of $P_1,P_2$ along the geodesic
$\delta$. Then, the distance between the two points is given by
$d_{\mathbb{CH}^2}(P_1,P_2) = t_2-t_1$. From Step 1, we conclude that
$e^{t_1}\mathcal{F} = \mathcal{F}(q)$ (\ref{eq:1}) and
$e^{-t_2}\mathcal{G}=\mathcal{F}(p)$ ((\ref{eq:2}). Therefore we have
$i(\mathcal{F}(q),\mathcal{F}(p)) = e^{t_1 - t_2}$.
\begin{figure}[ht]
	\centering
  \includegraphics[scale=0.3]{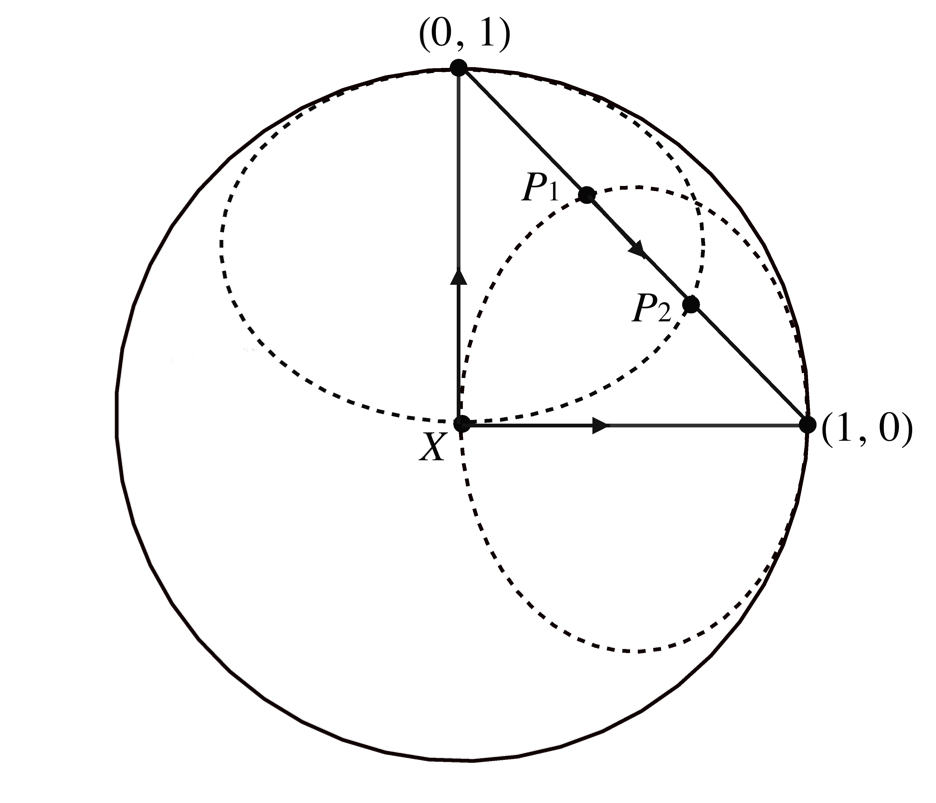}
  \caption{The real slice of $\mathbb{CH}^2\subset \mathbb{C}^2$
    coincides with the Klein model $\mathbb{RH}^2\subset \mathbb{R}^2$
    of the real hyperbolic plane of constant curvature $-1$. }
	\label{fig1}
\end{figure}
\begin{remark}
  A simple calculation shows that $t_2 -t_1= \log(2)$; hence,
  $i(\mathcal{F}(q),\mathcal{F}(p))= \frac{1}{2}$.
\end{remark}

\noindent\textbf{Step 3.} 
The holomorphic automorphism given by
$\phi (z,w)= (e^{-i \theta}z,w)$, for $(z,w)\in \mathbb{CH}^2$, is an
isometry of $\mathbb{CH}^2$ and sends the two horocycles
$H(\gamma_i,1)$ to the horocycles $H(\phi(\gamma_i),1)$, for
$i=1,2$. The Teichm\"uller geodesic ray $f(\phi(\gamma_1))$ is now
generated by $e^{i \theta} q$, whereas the Teichm\"uller geodesic ray
$f(\phi(\gamma_2))$ is still generated by $p \in Q(X)$. Since the
distance between $P_1$ and $P_2$ is equal to the distance between
$\phi(P_1)$ and $\phi(P_2)$, using Step 2 and the continuity of the
geometric intersection pairing we conclude that
$i(\mathcal{F}(e^{i \theta}q), \mathcal{G}) = \frac{1}{2}$ for all
$\theta \in \mathbb{R}/2\pi\mathbb{Z}$. However, this contradicts the
following Proposition~\ref{prop:intersection}.\qed
\restate[Proposition]{prop:intersection}{
  Let $q\in Q_1\T_{g,n}$ and $\mathcal{G}\in \mathcal{MF}_{g,n}$.
  There exist $v_1, \ldots, v_N \in \C^{*}$ such that
  $i(\mathcal{F}(e^{i\theta}q),\mathcal{G})=\sum_{i=1}^{N} |
  \text{Re}(e^{i\theta/2}v_i)|$ for all $\theta \in
  \mathbb{R}/2\pi\mathbb{Z}$.}
\begin{proof}[Proof of Proposition~\ref{prop:intersection}]
  Let $q \in Q(X)$ be a unit area quadratic differential. We assume
  first that $q$ has no poles and that $\mathcal{G}$ is an isotopy
  class of simple closed curves. The metric given by $|q|$ is flat
  with conical singularities of negative curvature at its set of zeros
  and hence the isotopy class of simple closed curves $\mathcal{G}$
  has a unique geodesic representative, which is a finite union of
  saddle connections of $q$. In particular, we can readily compute
  $i(\mathcal{F}(e^{i\theta}q),\mathcal{G})$ by integrating
  $|\text{Re} (\sqrt{e^{i\theta}q} )|$ along the union of these saddle
  connections. It follows that:
  \begin{equation}\label{eq:intersection}
    i(\mathcal{F}(e^{i\theta}q),\mathcal{G}) = \sum_{i=1}^{N} |
    \text{Re}(e^{i\theta/2}v_i)|
    \quad \text{for all} \quad \theta \in \mathbb{R}/2\pi\mathbb{Z}
  \end{equation}
  where $N$ denotes the number of the saddle connections and
  $\{ v_i \}_{i=1}^{N} \subset \C^{*}$ are their associated holonomy
  vectors.

  We note that when $q$ has simple poles, there need not be a geodesic
  representative in $\mathcal{G}$ anymore. Nevertheless, equation
  (\ref{eq:intersection}) is still true by applying the argument to a
  sequence of length minimizing representatives.

  Finally, we observe that the number of saddle connections $N$ is
  bounded from above by a constant that depends only on the topology
  of the surface. Combining this observation with the fact that any
  $\mathcal{G} \in \mathcal{MF}_{g,n}$ is a limit of simple closed
  curves and that the geometric intersection pairing $i(\cdot,\cdot):
  \mathcal{MF}_{g,n} \times \mathcal{MF}_{g,n} \rightarrow \R$ is
  continuous, we conclude that equation~(\ref{eq:intersection}) is
  true in general.
\end{proof}



\section{Final remarks}

We conclude this note with a few open questions and further results.~\\

\subsection*{Questions} 
~\\

\noindent\textbf{1.} Is Theorem~\ref{thm:disks:intro} true for
$f: \mathbb{CH}^1 \hookrightarrow \T_{g,n}$ a (real) $C^1$-smooth
\textit{local} isometry?~\\

\noindent\textbf{2.} Is there a \textit{round} complex two-dimensional
linear slice in $T_X\T_{g,n}$?~\\

\noindent\textbf{3.} Is there a holomorphic isometric immersion $f: (\mathcal{M},g) \hookrightarrow
\T_{g,n}$ from a Hermitian manifold with $\text{dim}_{\C}\mathcal{M} \geq 2$?~\\

\noindent\textbf{4.} Is there a holomorphic retraction
$\T_{g,n} \xtwoheadrightarrow{g} \mathbb{CH}^1$ onto every
Teichm\"uller disk $\mathbb{CH}^1 \xhookrightarrow{~~f~~} \T_{g,n}$
such that $g \circ f = id_{\mathbb{CH}^1}$? Equivalently, does the
Caratheodory metric equal to the Kobayashi metric for every complex
direction of $\T_{g,n}$?~\\

\subsection*{Further results}~\\

The following two theorems suggest that the answers to questions 2 \&
3 are \textit{no}.

\begin{thm}\label{thm:slice}
  There is no complex linear isometry
  $P: (\C^2,||\cdot||_2) \hookrightarrow (Q(X),||\cdot||_1)$.
\end{thm}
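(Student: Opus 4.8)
Assume, for contradiction, that such a $P$ exists, and set $q_1 = P(1,0)$, $q_2 = P(0,1)$; these are linearly independent, and applying the isometry hypothesis to the vectors $(1,\zeta)\in\C^2$ gives $\|q_1+\zeta q_2\|_1=\sqrt{1+|\zeta|^2}$ for every $\zeta\in\C$. The plan is to turn this into a rigid constraint on a measure on $\C$ and then to contradict it. Let $g=q_2/q_1$, a nonconstant meromorphic function on $X$, and let $T=g_*(|q_1|)$ be the push-forward to $\C$ of the flat area measure $|q_1|$; since $\|q_1\|_1=1$, $T$ is a Borel probability measure, and since $|q_1+\zeta q_2|=|q_1|\,|1+\zeta g|$ away from the (finite) zero set of $q_1$ we obtain $\int_\C|1+\zeta v|\,dT(v)=\sqrt{1+|\zeta|^2}$. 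Substituting $\lambda=-1/\zeta$ and rescaling, this becomes
\[
\int_\C |v-\lambda|\,dT(v)=\sqrt{1+|\lambda|^2}\qquad(\lambda\in\C).
\]

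Next I would solve this identity for $T$. Applying the Laplacian in $\lambda$, using $\Delta_\lambda|v-\lambda|=|v-\lambda|^{-1}$ in the distributional sense on $\R^2$ and $\Delta_\lambda\sqrt{1+|\lambda|^2}=(2+|\lambda|^2)(1+|\lambda|^2)^{-3/2}$, yields
\[
\int_\C \frac{dT(v)}{|v-\lambda|}=\frac{2+|\lambda|^2}{(1+|\lambda|^2)^{3/2}}.
\]
The left side is the Riesz $1$-potential of $T$, which is invertible on finite measures, so $T$ is uniquely determined; a direct computation (or Fourier analysis, which forces the characteristic function $\widehat{T}(\xi)=(1+|\xi|)e^{-|\xi|}$) identifies it as $dT(v)=\tfrac{3}{2\pi}(1+|v|^2)^{-5/2}\,dA(v)$. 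One checks the normalization against the previous display by evaluating at $\lambda=0$: $\int|v|^{-1}\,dT(v)=2$.

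The contradiction then comes from the fact that $T$ is a push-forward of a flat area measure. Near a point $v_0\in\C$ the density of $T=g_*(|q_1|)$ is $\rho(v)=\sum_{z\in g^{-1}(v)}|q_1(z)|/|g'(z)|^2$; over the complement of the finitely many branch values of $g$ this is a finite sum $\sum_i|\psi_i(v)|$ of moduli of the holomorphic functions $\psi_i(v)=q_1(z_i(v))/g'(z_i(v))^2$, and since $\rho$ is everywhere finite the ramification of $g$ must be absorbed by zeros of $q_1$, so each $\psi_i$ extends holomorphically across the branch values (near such a value $\rho$ has the form $\sum_i|\Psi(\omega^i(v-v_0)^{1/e})|$ with $\Psi$ holomorphic, which is subharmonic in $v$, being a subharmonic function of $(v-v_0)^{1/e}$ precomposed with a holomorphic map). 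Hence $\rho$ is subharmonic on all of $\C$. But $\Delta\big((1+|v|^2)^{-5/2}\big)=(25|v|^2-10)(1+|v|^2)^{-9/2}$ is strictly negative for $|v|^2<2/5$, so $\rho$ is strictly superharmonic near the origin — a contradiction, proving the theorem.

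The conceptual heart is the clean dichotomy "push-forward densities of flat area measures under meromorphic maps are subharmonic, but $(1+|v|^2)^{-5/2}$ is not." The step I expect to demand the most care is the middle one — justifying the differentiation under the integral, and pinning down $T$ \emph{exactly} from the potential identity via the mapping properties of the Riesz $1$-potential on measures — together with the bookkeeping in the last step that makes the subharmonicity of $\rho$ airtight across the branch locus of $g$ and at the images of common zeros of $q_1$ and $q_2$.
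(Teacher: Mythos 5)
You should note first that this paper does not actually prove Theorem~\ref{thm:slice}: it defers the proof to~\cite{Antonakoudis:birational}, so your argument can only be judged on its own terms. On those terms it is essentially correct, and its scheme --- reduce the isometry to the identity $\int_\C|v-\lambda|\,dT(v)=\sqrt{1+|\lambda|^2}$ for the push-forward $T=(q_2/q_1)_*(|q_1|)$, identify $T$ as an explicit radial measure, then contradict the complex-analytic structure of a push-forward density --- is very much in the spirit of the cited work. The computational claims check out: the normalization $\tfrac{3}{2\pi}$, the moments $\int|v|\,dT=1$ and $\int|v|^{-1}\,dT=2$, the transform $\widehat T(\xi)=(1+|\xi|)e^{-|\xi|}$, and $\Delta\bigl((1+|v|^2)^{-5/2}\bigr)=(25|v|^2-10)(1+|v|^2)^{-9/2}$ are all right, and the candidate density really does satisfy the required identity (one clean way to see this: with $g_1,g_2$ i.i.d.\ standard complex Gaussians, $E|g_1+\zeta g_2|=E|g_1|\sqrt{1+|\zeta|^2}$, and the weighted push-forward of $g_2/g_1$ by $|g_1|$ has density $\tfrac{3}{2\pi}(1+|v|^2)^{-5/2}$). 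Uniqueness of $T$, which you rightly flag as the delicate step, is standard: besides injectivity of the Riesz $1$-potential one can use strict negative definiteness of the kernel $|v-w|$ (if $\int|v-\lambda|\,d\sigma(v)=0$ for all $\lambda$, with $\sigma=T-T_0$ of total mass zero and finite first moment, integrate against $d\sigma(\lambda)$), or invoke Rudin's equimeasurability theorem for $L^1$-isometries.

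Two points in your last step need repair, though neither is fatal. First, the exceptional set should contain not only the critical values of $g=q_2/q_1$ but also the images of the punctures and of the poles of the meromorphic function $q_1/(dg)^2$; since $q_1,q_2$ extend meromorphically to the compactification, $g$ has finite degree and this bad set $S\subset\C$ is finite --- a fact you use implicitly and should state. Second, your argument that $\rho$ is subharmonic on \emph{all} of $\C$ is shaky as written: ``$\rho$ is everywhere finite'' is not known a priori (only a.e.\ equality with a bounded continuous density, which upgrades to equality on $\C\setminus S$ by continuity), and a subharmonic function of $(v-v_0)^{1/e}$ is not literally ``precomposed with a holomorphic map''; the correct statement is that the symmetrized trace of a subharmonic function under $z\mapsto z^{e}$ is subharmonic. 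More importantly, this whole branch-locus analysis is unnecessary: subharmonicity of $\rho=\sum_i|\psi_i|$ on $\C\setminus S$, together with $\rho(v)=\tfrac{3}{2\pi}(1+|v|^2)^{-5/2}$ there, already yields a contradiction at any point of $\{|v|^2<2/5\}\setminus S$, which is nonempty (indeed open and dense in the disk) because $S$ is finite. I recommend dropping the extension across branch values and phrasing the contradiction that way.
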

\begin{remark}
  This result is used in the proof of
  Theorem~\ref{thm:bsds:dual:intro}. See~\cite{Antonakoudis:birational}
  for a proof.
\end{remark}

As an application of Theorem~\ref{thm:bsds:intro}, we prove:
\begin{thm}\label{thm:kaehler}
  Let $(\mathcal{M},g)$ be a complete K\"ahler manifold with
  $\text{dim}_{\C}\mathcal{M} \geq 2$ and holomorphic sectional
  curvature at least $-4$. There is no holomorphic map
  $f:\mathcal{M}\rightarrow \T_{g,n}$ such that $df$ is an isometry
  on tangent spaces.
\end{thm}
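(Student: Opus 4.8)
The plan is to show that the curvature hypothesis, together with the infinitesimal isometry, forces $(\mathcal{M},g)$ to have constant holomorphic sectional curvature $-4$; then $\mathcal{M}$ is, up to passing to the universal cover, complex hyperbolic space $\mathbb{CH}^N$ with $N=\dim_{\C}\mathcal{M}\geq 2$, and $f$ becomes a holomorphic map of a bounded symmetric domain of dimension at least two into $\T_{g,n}$ whose differential is an isometry for the Kobayashi norms, contradicting Theorem~\ref{thm:bsds:intro}. The first step is a harmless reduction: since $\T_{g,n}$ is contractible, $f$ lifts through the universal covering $\widetilde{\mathcal{M}}\to\mathcal{M}$, and $\widetilde{\mathcal{M}}$ is again complete K\"ahler of complex dimension at least two with holomorphic sectional curvature $\geq -4$, the differential of the lift being again an isometry on tangent spaces, so we may assume $\mathcal{M}$ is simply connected.

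The core of the argument is the estimate that, under the hypotheses, the holomorphic sectional curvature of $g$ is everywhere at most $-4$. The input from the target is that the Teichm\"uller-Kobayashi metric $k_{\T_{g,n}}$ has holomorphic sectional curvature at most $-4$, in the sense that every holomorphic disk into $\T_{g,n}$ is non-expanding and through every point and every complex direction there passes a Teichm\"uller disk --- an isometrically embedded, totally geodesic copy of $\mathbb{CH}^1$, hence of constant curvature $-4$ --- tangent to that direction. To exploit this, fix $p\in\mathcal{M}$ and a unit vector $v\in T_p\mathcal{M}$; in K\"ahler normal coordinates at $p$ with $v=\partial/\partial z_1$, the coordinate line $C=\{z_2=\dots=z_N=0\}$ is a local holomorphic curve through $p$ whose second fundamental form vanishes at $p$, so the induced metric $g|_C$ has Gauss curvature at $p$ equal to the holomorphic sectional curvature $H_g(v)$. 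On the other hand $g|_C=(f|_C)^{*}k_{\T_{g,n}}$ is the pullback of $k_{\T_{g,n}}$ by the holomorphic curve $f|_C$, and comparing near $f(p)$ with the Teichm\"uller disk tangent to $df(v)$ through an Ahlfors-Schwarz argument forces this pullback metric to have Gauss curvature at most $-4$ at $p$. Hence $H_g(v)\leq -4$, and combined with the hypothesis $H_g(v)\geq -4$ this gives $H_g\equiv -4$, i.e.\ $(\mathcal{M},g)$ has constant holomorphic sectional curvature $-4$.

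It then remains to invoke the classification of complex space forms: a complete simply connected K\"ahler manifold of constant holomorphic sectional curvature $-4$ is holomorphically isometric to $\mathbb{CH}^N$ with the metric of constant holomorphic curvature $-4$, which is precisely the Kobayashi metric of the unit ball in $\C^N$, where $N=\dim_{\C}\mathcal{M}\geq 2$. Thus $f\colon\mathbb{CH}^N\hookrightarrow\T_{g,n}$ is holomorphic with $df$ an isometry for the Kobayashi norms, and $\mathbb{CH}^N$ is a bounded symmetric domain of complex dimension at least two; this contradicts Theorem~\ref{thm:bsds:intro}, and equivalently, restricting $f$ to a totally geodesic $\mathbb{CH}^2\subset\mathbb{CH}^N$, it contradicts Theorem~\ref{thm:balls}. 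I expect the main obstacle to be the curvature estimate of the second paragraph: making rigorous, despite $k_{\T_{g,n}}$ being only of class $C^1$, the assertion that a holomorphic curve in $\T_{g,n}$ carries an induced metric of Gauss curvature at most $-4$; the Teichm\"uller disks should serve, through every point and direction, as a family of curvature $-4$ supporting metrics for an Ahlfors-Schwarz comparison carried out on the smooth source side.
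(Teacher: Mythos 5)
Your proposal is correct and takes essentially the same route as the paper: curvature monotonicity under the holomorphic isometric immersion forces $H_g\equiv -4$, completeness (after passing to the universal cover) gives $\mathcal{M}\cong\mathbb{CH}^N$ with $N\geq 2$, and Theorem~\ref{thm:bsds:intro} (equivalently Theorem~\ref{thm:balls}, via a totally geodesic $\mathbb{CH}^2$ slice) yields the contradiction. The only difference is that the paper outsources your second paragraph---that a holomorphic map into $\T_{g,n}$ with isometric differential pulls back holomorphic sectional curvature $\leq -4$---to Royden~\cite{Royden:metric}, rather than re-deriving it through the Ahlfors--Schwarz comparison with Teichm\"uller disks that you sketch.
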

\begin{proof}
  The monotonicity of holomorphic sectional curvature under
  holomorphic maps and the existence of (totally geodesic) holomorphic
  isometries $\mathbb{CH}^1 \hookrightarrow \T_{g,n}$ through every
  complex direction imply that $\mathcal{M}$ has constant holomorphic
  curvature -4.~\cite{Royden:metric} Since $\mathcal{M}$ is a complete
  K\"ahler manifold, we have $\mathcal{M}\cong\mathbb{CH}^{N}$, which
  is impossible when $N \geq 2$ by Theorem~\ref{thm:bsds:intro}.
\end{proof}

We also mention the following immediate corollaries of
Theorem~\ref{thm:slice} and Theorem~\ref{thm:kaehler}, respectively.
\begin{cor}\label{cor:hermitian}
  Let $(\mathcal{M},g)$ be a Hermitian manifold with
  $\text{dim}_{\C}\mathcal{M} \geq 2$. There is \textit{no}
  holomorphic isometric submersion
  $\displaystyle g: \T_{g,n} \twoheadrightarrow \mathcal{M}$. 
\end{cor}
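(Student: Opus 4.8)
The plan is to reduce the statement to Theorem~\ref{thm:slice} by inspecting the coderivative of $g$ at a single point. Suppose, for contradiction, that a holomorphic isometric submersion $g\colon\T_{g,n}\twoheadrightarrow\mathcal{M}$ exists, and fix any point $X\in\T_{g,n}$ with image $p=g(X)\in\mathcal{M}$. Since $g$ is holomorphic its differential $dg_X\colon T_X\T_{g,n}\to T_p\mathcal{M}$ is $\C$-linear, and since $g$ is a submersion $dg_X$ is surjective; dually, the coderivative $dg_X^{*}\colon T_p^{*}\mathcal{M}\to T_X^{*}\T_{g,n}$ is an injective $\C$-linear map. As in Theorem~\ref{thm:bsds:dual:intro}, ``isometric submersion'' is to be read as: $dg_X$ maps the Kobayashi unit ball of $T_X\T_{g,n}$ onto the (Hermitian) unit ball of $T_p\mathcal{M}$; since both balls are compact convex bodies, this is equivalent by linear duality to $dg_X^{*}$ being a linear isometry for the dual norms on cotangent spaces.

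The next step is to identify these two dual norms. By the description of the Teichm\"uller--Kobayashi metric recalled in Section~\ref{sec:prelim}, the cotangent space $T_X^{*}\T_{g,n}$ is $Q(X)$ and the Kobayashi norm on $T_X\T_{g,n}$ is by definition dual to $||\cdot||_1$ on $Q(X)$; as $T_X\T_{g,n}$ is finite dimensional, the norm dual to the Kobayashi norm is therefore $||q||_1=\int_X|q|$ itself, back on $Q(X)$. On the other side, $\mathcal{M}$ is Hermitian, so the norm on $T_p\mathcal{M}$ is induced by a Hermitian inner product and hence the dual norm on $T_p^{*}\mathcal{M}$ is again Euclidean, on a complex vector space of dimension $\dim_\C\mathcal{M}\geq 2$.

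Finally I would pick a two-dimensional complex subspace $W\subset T_p^{*}\mathcal{M}$ together with an orthonormal basis for the restricted Euclidean norm; this identifies $W$ isometrically with $(\C^2,||\cdot||_2)$, and then $dg_X^{*}|_W$ is a complex linear isometry $(\C^2,||\cdot||_2)\hookrightarrow(Q(X),||\cdot||_1)$, contradicting Theorem~\ref{thm:slice}. I do not expect a genuine obstacle in this argument; the only point that needs care is the unwinding of ``holomorphic isometric submersion'' into the statement that the coderivative is a linear isometric embedding of normed spaces, after which the contradiction with Theorem~\ref{thm:slice} is immediate --- which is precisely why the result is recorded as a corollary.
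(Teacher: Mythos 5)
Your argument is correct and is precisely the derivation the paper intends when it calls this an immediate corollary of Theorem~\ref{thm:slice}: the coderivative $dg_X^{*}$ at any point is a $\C$-linear isometric embedding of the (Hermitian, hence Euclidean) dual cotangent norm of $\mathcal{M}$ into $(Q(X),||\cdot||_1)$, and restricting to a two-dimensional subspace contradicts Theorem~\ref{thm:slice}. The only remark is that the unit-ball/duality unwinding you perform is already built into the paper's definition of isometric submersion (as in Theorem~\ref{thm:bsds:dual:intro}, where $dg^{*}$ is required to be an isometry on cotangent spaces), so that step is a harmless but unnecessary detour.
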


\begin{cor}\label{cor:kaehler}
  There is no holomorphic, totally geodesic isometry from a K\"ahler
  manifold $\mathcal{M}$ into a Teichm\"uller space $\T_{g,n}$, so long
  as $\mathcal{M}$ has dimension two or more.
\end{cor}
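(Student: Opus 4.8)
The plan is to reduce the statement to Theorem~\ref{thm:kaehler}. Given a holomorphic totally geodesic isometry $f:\mathcal{M}\hookrightarrow\T_{g,n}$ with $\dim_{\C}\mathcal{M}\geq 2$, the map $f$ is already holomorphic with $df$ an isometry on tangent spaces, so it suffices to verify the two remaining hypotheses of Theorem~\ref{thm:kaehler}: that $(\mathcal{M},g)$ is complete and that its holomorphic sectional curvature is bounded below by $-4$. Both will come from the fact that the totally geodesic image $f(\mathcal{M})\subset\T_{g,n}$ is saturated by Teichm\"uller disks.

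First I would record the local picture. Fix $p=f(q)\in f(\mathcal{M})$ and a complex tangent direction $v\in T_{p}f(\mathcal{M})$; since $f(\mathcal{M})$ is a complex submanifold, the whole complex line $\C\cdot v$ lies in $T_{p}f(\mathcal{M})$. Through $p$ there is a Teichm\"uller disk tangent to $v$, and it is swept out by the real Teichm\"uller geodesics issuing from $p$ whose initial directions fill $\C\cdot v$. Because $f$ is totally geodesic, each such geodesic of $\T_{g,n}$ agrees, near $p$, with the $f$-image of a geodesic of $\mathcal{M}$, so a neighbourhood of $p$ in this disk lies in $f(\mathcal{M})$; pulling back by $f$ gives a holomorphic isometric immersion of a disk of $\mathbb{CH}^{1}$ through $q$ tangent to $df_{q}^{-1}v$. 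By the monotonicity of holomorphic sectional curvature under holomorphic isometric immersions — applied to the genuinely Riemannian K\"ahler metric $g$, exactly as in the proof of Theorem~\ref{thm:kaehler} — this forces the holomorphic sectional curvature of $(\mathcal{M},g)$ to be at least $-4$ in the direction $df_{q}^{-1}v$; as $p$ and $v$ vary we get the required lower bound everywhere on $\mathcal{M}$.

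Next I would upgrade the local statement to a global one: the Teichm\"uller disk tangent to any $v\in T_{p}f(\mathcal{M})$ is contained in $f(\mathcal{M})$ in its entirety. Granting this, every unit-speed geodesic of $\mathcal{M}$ maps, after reparametrisation, onto a geodesic of such a disk $D\cong\mathbb{CH}^{1}$; since $D$ is a complete copy of $\mathbb{CH}^{1}$ and $f$ is a totally geodesic isometry onto its image, the geodesic of $\mathcal{M}$ extends for all time, so $(\mathcal{M},g)$ is geodesically complete and hence complete by Hopf--Rinow. At that point Theorem~\ref{thm:kaehler} applies verbatim and yields the contradiction. Alternatively, the lower bound above together with the upper bound $-4$ coming from the Gauss equation for the isometric immersion $\mathcal{M}\hookrightarrow\T_{g,n}$ and the fact that Teichm\"uller disks are totally geodesic shows that $(\mathcal{M},g)$ has \emph{constant} holomorphic sectional curvature $-4$; being a complete K\"ahler manifold, $\mathcal{M}\cong\mathbb{CH}^{N}$ with $N=\dim_{\C}\mathcal{M}\geq 2$, and one may invoke Theorem~\ref{thm:bsds:intro}, or its special case Theorem~\ref{thm:balls}, directly.

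The hard part will be the global step — showing that a Teichm\"uller disk tangent to a direction of the image does not exit the image. Local total geodesy alone does not force this, since an open piece of a totally geodesic submanifold is still totally geodesic, so one must exploit that $f$ is defined on all of $\mathcal{M}$, together with the completeness of $\T_{g,n}$ and of each Teichm\"uller disk, via an open--closed argument inside the disk. The Finsler (non-Riemannian) nature of the Teichm\"uller--Kobayashi metric, and the possibility that $f(\mathcal{M})$ is not closed in $\T_{g,n}$, are what make this step require care.
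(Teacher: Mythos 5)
Your local step is essentially correct and is in the spirit of what makes the paper call this corollary immediate: the paper gives no separate proof, and the totally geodesic hypothesis is indeed meant to replace the curvature hypothesis of Theorem~\ref{thm:kaehler}. Concretely, using uniqueness of Teichm\"uller geodesics with prescribed initial point and direction, the $f$-images of the $\mathcal{M}$-geodesics tangent to $\mathrm{span}_{\R}(w,Jw)$ sweep out a neighbourhood of $f(p)$ in the Teichm\"uller disk; its preimage under a local inverse of the immersion $f$ is a complex curve in $\mathcal{M}$ whose induced metric is the Poincar\'e metric of curvature $-4$ (the Kobayashi norm of $\T_{g,n}$ restricts to the $\mathbb{CH}^1$-norm on a Teichm\"uller disk), and the Gauss equation for complex curves in a K\"ahler manifold then gives holomorphic sectional curvature $\geq -4$ at every $(p,w)$. (Your parenthetical alternative, an \emph{upper} bound ``$-4$ from the Gauss equation for $\mathcal{M}\hookrightarrow\T_{g,n}$'', is not available, since the Teichm\"uller metric is Finsler rather than Hermitian; the upper bound is Royden's monotonicity of holomorphic curvature, as invoked in the proof of Theorem~\ref{thm:kaehler} --- but you do not need it for your reduction.)

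The genuine gap is completeness of $(\mathcal{M},g)$, and the route you sketch cannot close it. Your global step (a Teichm\"uller disk tangent to the image stays entirely in the image) is to be proved by an open--closed argument inside the disk, but closedness is exactly where it breaks: for points $x_n\in D\cap f(\mathcal{M})$ with $x_n\to x\in D$ you must produce preimages converging in $\mathcal{M}$, and nothing prevents them from leaving every compact subset. Indeed, all of your hypotheses (holomorphic, $df$ isometric, geodesic segments to geodesic segments of the same length) are inherited by the restriction of $f$ to any open subset of $\mathcal{M}$, so no argument of this type can force completeness of $\mathcal{M}$ or of disks in the image; the step is circular, since completeness is precisely what it is meant to deliver. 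Moreover, even granting that the full disk lies in $f(\mathcal{M})$, your conclusion that geodesics of $\mathcal{M}$ extend for all time does not follow: $f$ is only an immersion, so the limit point in $D$ may have no preimage near the escaping geodesic (think of $\mathcal{M}$ with a point removed). The derivation becomes immediate only under the stronger reading of ``totally geodesic isometry'', namely that maximal real geodesics of $\mathcal{M}$ are carried, preserving length, \emph{onto} complete Teichm\"uller geodesics; since the latter have infinite length this forces geodesic completeness outright, and then Hopf--Rinow plus Theorem~\ref{thm:kaehler} (or your curvature sandwich plus Theorem~\ref{thm:bsds:intro}) finishes. Under the weaker, segment-wise reading you adopt, completeness is simply not a consequence of the hypotheses, and your reduction to Theorem~\ref{thm:kaehler} is incomplete as it stands.
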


For partial results and references towards question 4, see
~\cite{Kra:abelian},~\cite{McMullen:covs} and~\cite{Fletcher:Markovic:survey}.


\end{document}